\documentclass[a4paper,10pt]{article}
\usepackage{amssymb,latexsym}
\usepackage{amsmath,amsthm}
\usepackage{graphicx}
\usepackage{subfigure}
\usepackage{caption}
\usepackage{mathtools}
\usepackage{amsthm}
\usepackage{epstopdf}

\newcounter{case}
\setcounter{case}{0}
\newcounter{subcase}[case]
\setcounter{subcase}{0}

\setlength{\topmargin}{0mm}
\setlength{\headheight}{0mm}
\setlength{\headsep}{5mm}
\setlength{\textheight}{240mm}
\setlength{\textwidth}{160mm}
\setlength{\oddsidemargin}{0mm}
\setlength{\evensidemargin}{0mm}

\newtheorem{theorem}{Theorem}[section]
\newtheorem{lemma}[theorem]{Lemma}

\newtheorem{conjecture}[theorem]{Conjecture}

\theoremstyle{remark}
\newtheorem{notation}[theorem]{Notation}
\newtheorem{remark}[theorem]{Remark}
\newtheorem{definition}[theorem]{Definition}

\newtheorem{observation}[theorem]{Observation}

\newtheorem{problem}{Problem}

\renewcommand{\ge}{\geqslant}
\renewcommand{\le}{\leqslant}

\def\cref#1{Corollary~$\ref{#1}$}

\newcommand{\A}{\mathcal{A}}
\newcommand{\B}{\mathcal{B}}

\newcommand{\ip}{\mathcal{P}}

\begin{document}
\date{}

\title{Distance magic labeling in complete 4-partite graphs}

\author{
Daniel Kotlar\\
\small Department of Computer Science, Tel-Hai College, Upper Galilee, Israel\\
}

\maketitle

\begin{abstract}
Let $G$ be a complete $k$-partite simple undirected graph with parts of sizes $p_1\le p_2\cdots\le p_k$. Let $P_j=\sum_{i=1}^jp_i$ for $j=1,\ldots,k$. It is conjectured that $G$ has distance magic labeling if and only if $\sum_{i=1}^{P_j} (n-i+1)\ge j{{n+1}\choose{2}}/k$ for all $j=1,\ldots,k$. The conjecture is proved for $k=4$, extending earlier results for $k=2,3$.
\end{abstract}


\section{Introduction}
Let $G=(V,E)$ be a finite, simple, undirected graph of order $n$. Denoting $[n]=\{1,2,\ldots,n\}$, a \emph{distance magic labeling} of $G$ \cite{miller2003} (or \emph{sigma labeling} \cite{beena2009}) is a bijection $f:V\rightarrow[n]$ such that for all $x\in V$,
\begin{equation}\label{eq1}
\sum_{y\in N(x)}f(y)=\mathbf{c}
\end{equation}
for a constant $\mathbf{c}$, independent of $x$ ($N(x)$ is the set of vertices adjacent to $x$).

Miller, Rodger and Simanjuntak \cite{miller2003} showed that if $G$ is the $2kr$-regular multipartite graph $H\times \overline{K_{2k}}$ then $G$ has a distance magic labeling ($\overline{K_{2k}}$ is the complement of the graph $K_{2k}$). They also showed that for the complete symmetric multipartite graph $H_{n,p}$ with $p$ parts and $n$ vertices in each part, $H_{n,p}$ has a distance magic labeling if and only if either $n$ is even or both $n$ and $p$ are odd. In addition, they gave necessary and sufficient conditions for a complete multipartite graph $K_{p_1,p_2,\ldots,p_k}$ (where the parts are not necessarily of equal sizes) to have a distance magic labeling for $k=2,3$. The result for $k=2$ also appears in \cite{beena2009}. For more results and surveys on distance magic labeling see \cite{anholcer2014distance, arumugam2012distance, gallian2009dynamic}.

It has been observed that the problem of characterizing the complete multipartite graphs which have a distance magic labeling is equivalent to a problem on partitions of $[n]$: let $V=V_1\cup V_2\cup\cdots\cup V_k$ be the parts of $G=K_{p_1,p_2,\ldots,p_k}$ with sizes $p_1,p_2,\ldots,p_k$, respectively, so that $p_1+p_2+\cdots+p_k=n$. Then, $G$ has a distance magic labeling if and only if there exists a bijection $f:V\rightarrow[n]$ such that for all $j=1,\dots,k$, $\sum_{i=1,i\ne j}\sum_{x\in V_i}f(x)=\mathbf{c}$, where $\mathbf{c}$ is a constant. This is equivalent to $\mathbf{c}={{n+1}\choose{2}}-\sum_{x\in V_j}f(x)$ for all $j=1,\dots,k$, or $\sum_{x\in V_j}f(x)={{n+1}\choose{2}}/k$ for all $j=1,\dots,k$. Denote $s^{n,k}={{n+1}\choose{2}}/k$. The problem can be reformulated as follows:

\begin{problem}\label{prob1}
Let $n,k$ and $p_1,\dots,p_k$ be positive integers such that $p_1+\cdots+p_k=n$ and $s^{n,k}$ is an integer. When is it possible to find a partition of the set $[n]$ into $k$ subsets of sizes $p_1,\dots,p_k$, respectively, such that the sum of the elements in each subset is $s^{n,k}$?
\end{problem}

Anholcer, Cichacz and Peterin \cite{An2015} related this problem to a different problem in vertex labeling: let $G$ be the graph obtained from the cycle $C_k$ by replacing every vertex $v_i$ by a clique $K[v_i]$ of some order $p_i$, and joining all the vertices of each $K[v_i]$ with all the vertices of $K[v_j]$ whenever $v_j$ is a neighbor of $v_i$ in $C_k$. Then, consider the problem of finding a \emph{closed distance magic labeling} of $G$, that is, a magic labeling where the sum in (\ref{eq1}) includes $f(x)$. Clearly, if the partition in Problem~\ref{prob1} is solved for $k$, then $G$ has a closed distance magic labeling. A necessary condition for such a partition to exist was observed in \cite{An2015}:

\begin{observation}
Assume the setup of Problem~\ref{prob1} with $p_1,\ldots,p_k$ given in a non-decreasing order. Let $P_j=\sum_{i=1}^jp_i$ for $j=1,\ldots,k$. If the mentioned partition of $[n]$ exists, then
\begin{equation}\label{nec_cond}
\sum_{i=1}^{P_j} (n-i+1)\ge js^{n,k} \quad {\textrm {for all}}\quad j=1,\ldots,k.
\end{equation}
\end{observation}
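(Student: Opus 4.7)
The plan is to observe that the left-hand side of \eref{nec_cond} is simply the largest possible sum of any $P_j$ distinct elements of $[n]$, while the right-hand side equals the sum of the elements in the union of the $j$ smallest parts of any valid partition; hence the inequality is forced.

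More concretely, I would proceed as follows. First, suppose a partition of $[n]$ into subsets $V_1,\ldots,V_k$ with $|V_i|=p_i$ and $\sum_{x\in V_i}f(x)=s^{n,k}$ exists (where I identify $f$ with the identity on $[n]$). Fix $j\in\{1,\ldots,k\}$ and consider the union $U=V_1\cup\cdots\cup V_j$. Since the parts are disjoint, $|U|=p_1+\cdots+p_j=P_j$ and the elements of $U$ sum to $js^{n,k}$.

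Next, I would bound this sum from above. Since $U$ is a set of $P_j$ distinct integers in $[n]$, its sum is at most the sum of the $P_j$ largest integers in $[n]$, namely $n+(n-1)+\cdots+(n-P_j+1)=\sum_{i=1}^{P_j}(n-i+1)$. Combining gives $js^{n,k}=\sum_{x\in U}x\le\sum_{i=1}^{P_j}(n-i+1)$, which is exactly \eref{nec_cond}.

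There is essentially no obstacle: the argument uses nothing beyond the extremal bound for sums of distinct elements of $[n]$ and disjointness of the parts. The only mildly nontrivial point is noticing why one picks the union of the $j$ \emph{smallest} parts (as opposed to any $j$ parts): with the non-decreasing order $p_1\le\cdots\le p_k$, taking the smallest parts yields the smallest $P_j$ and therefore the tightest and most informative instance of the bound, but the same derivation goes through for any $j$ parts and any subset-sum would give a valid (though weaker) necessary condition.
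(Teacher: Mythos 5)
Your proof is correct: the union of the $j$ smallest parts has $P_j$ elements summing to $js^{n,k}$, and this is bounded above by the sum of the $P_j$ largest elements of $[n]$, which is exactly the left-hand side of \eref{nec_cond}. The paper states this observation without proof (attributing it to the cited manuscript of Anholcer, Cichacz and Peterin), and your argument is the standard one that establishes it.
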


It is conjectured here (Conjecture~\ref{conj1}) that this condition is sufficient. Before formulating the conjecture we shall need some notation and definitions:

\begin{definition}
Let $n$ be a positive integer and let $\ip=\{p_1, p_2,\cdots, p_k\}$ be a set of positive integers satisfying $\sum_{i=1}^k p_i=n$. We say that a partition $\A=\{A_1,A_2,\dots,A_k\}$ of $[n]$ \emph{implements} the sequence $\{p_i\}_{i=1}^k$, if there is a bijection $f:\A\rightarrow\ip$ such that $|A_i|=f(A_i)$ for all $i=1,\ldots,k$.
\end{definition}

\begin{definition}
For any set of integers $A$, the \emph{sum} of $A$, denoted $S(A)$, is the sum of the elements in $A$.
\end{definition}

\begin{definition}
Let $n$ and $k$ be positive integer such that $s^{n,k}$ is an integer. We say that the partition $\A=\{A_1,A_2,\dots,A_k\}$ of $[n]$ is \emph{equitable} if $S(A_i)=s^{n,k}$ for all $i=1,\ldots,k$.
\end{definition}

\begin{conjecture}\label{conj1}
Let $k<n$ be positive integers such that $s^{n,k}$ is an integer. Let $1<p_1\le p_2\le\cdots\le p_k$ be positive integers such that $\sum_{i=1}^k p_i=n$. There exists an equitable partition of $[n]$ implementing $\{p_i\}_{i=1}^k$ if and only if (\ref{nec_cond}) holds.
\end{conjecture}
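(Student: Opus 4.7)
The plan is to proceed by strong induction on $k$ after strengthening Conjecture~\ref{conj1} as follows: for any set $T\subset\mathbb{Z}_{>0}$ of $n$ distinct positive integers with $S(T)=ks$ and any sizes $p_1\le\cdots\le p_k$ summing to $n$, there is a partition of $T$ into parts $A_1,\ldots,A_k$ with $|A_i|=p_i$ and $S(A_i)=s$ for all $i$ if and only if the sum of the largest $P_j$ elements of $T$ is at least $js$ for every $j=1,\ldots,k$. This strengthening is essential, because once a part is removed from the initial interval $[n]$ the remainder is no longer an interval, so the original form of the hypothesis cannot be fed back into itself.

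The base case $k=2$ is known in the original form from \cite{miller2003, beena2009}. For its generalized form I would prove a density lemma: the set of sums of $p_1$-element subsets of $T$ hits every integer between the sum of the $p_1$ smallest and the sum of the $p_1$ largest elements of $T$, established by a sequence of single-element swaps that alter the sum by $\pm 1$ at each step. For $T=[n]$ this is immediate; for general $T\subset\mathbb{Z}_{>0}$ the step reduces to a short extremal argument.

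For the inductive step I would extract the smallest part first. Condition (\ref{nec_cond}) for $j=1$ says that the top $p_1$ elements of $T$ sum to at least $s$, while (\ref{nec_cond}) for $j=k-1$, read as a complementary inequality, forces the bottom $p_1$ elements to sum to at most $s$; the density lemma then delivers some $A_1\subset T$ with $|A_1|=p_1$ and $S(A_1)=s$. Among all such $A_1$ I would select the one that is lexicographically smallest after its elements have been sorted in increasing order, and then invoke the induction hypothesis on $T\setminus A_1$ with sizes $p_2,\ldots,p_k$ and target sum $s$.

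The main obstacle, and presumably the reason the paper confines itself to $k=4$, is verifying that the strengthened hypothesis survives the extraction of $A_1$: for every $j=1,\ldots,k-1$, the top $P_{j+1}-p_1$ elements of $T\setminus A_1$ must sum to at least $js$. If this inequality fails for some intermediate $j$, then $A_1$ must contain elements lying near the top of $T$, and the plan is to produce a swap, or a chain of swaps, that replaces a high element of $A_1$ by a lower unused one while preserving both $|A_1|=p_1$ and $S(A_1)=s$, contradicting the lexicographic minimality of $A_1$. Making this exchange rigorous — locating the swap partners, bounding the length of the chain, and exploiting condition (\ref{nec_cond}) for all intermediate $j$ simultaneously — appears to be the core difficulty, and it is not \emph{a priori} clear that a swap of bounded length will always suffice; this is where the conjecture has so far resisted a uniform proof.
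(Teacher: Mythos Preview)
Your proposal targets the full conjecture, whereas the paper does not prove it in general: Theorem~\ref{thm1} establishes only the case $k=4$ (with $k=2,3$ handled by earlier work and by Remarks~\ref{rem:1} and~\ref{rem:2}). More seriously, the strengthening you build the induction on is false. Take $T=\{1,2,100,201\}$, $k=2$, $p_1=p_2=2$, so $s=152$. The two largest elements sum to $301\ge s$ and $S(T)=304=2s$, so your generalized condition holds; yet the $2$-element subset sums of $T$ are $3,101,102,202,203,301$, none equal to $152$. The same example refutes the density lemma: those six values are nowhere near covering the interval $[3,301]$. The property that a single swap changes a subset sum by $\pm 1$ is a special feature of sets of consecutive integers and is lost the moment you delete a nontrivial $A_1$ from $[n]$. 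Thus the base case of the strengthened statement already fails, and there is no hope of feeding $[n]\setminus A_1$ back into the hypothesis without imposing further structural constraints on $T$ that you have not identified (and that would have to be shown inductively invariant).

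The paper's approach is orthogonal: it never reduces $k$. For fixed $k=4$ it picks a partition $\A$ minimizing $d(\A)=\sum_i(S(A_i)-s^{n,4})^2$ among partitions of $[n]$ with the prescribed sizes, and shows that if $d(\A)>0$ then $\A$ is forced into an extremely rigid shape---one low part with sum $s^{n,4}-1$, one high part with sum $s^{n,4}+1$, two exact parts, minimal ``width'' $2$ or $3$ between a low and a high element, and a catalogue of forbidden local configurations (Lemmas~\ref{lemma:3}--\ref{lemma:7})---from which a contradiction is wrung by case analysis. Crucially, the argument stays inside $[n]$ throughout, so consecutive integers are always available for the swaps $\chi_{a,a+1}$ that drive every step; this is exactly the resource your inductive scheme forfeits after the first extraction.
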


This conjecture was in fact proved in \cite{miller2003} for $k=2,3$. The case $k=2$ was independently proved in \cite{beena2009}. The main result in this paper is:

\begin{theorem}\label{thm1}
Conjecture \ref{conj1} holds for $k=4$.
\end{theorem}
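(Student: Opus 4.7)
I would prove Conjecture~\ref{conj1} for $k = 4$ by strong induction on $n$, combined with explicit constructions for the base cases. The goal is to build an equitable partition $\A = \{A_1, A_2, A_3, A_4\}$ of $[n]$ with $|A_i| = p_i$ and $S(A_i) = s^{n,4}$, assuming only that (\ref{nec_cond}) holds.

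The inductive step exploits the identity that the eight largest elements $\{n-7, \ldots, n\}$ decompose into four pairs of equal sum,
\[
(n, n-7),\quad (n-1, n-6),\quad (n-2, n-5),\quad (n-3, n-4),
\]
each pair summing to $2n - 7$. When $p_1 \ge 4$ (so $p_i - 2 \ge 2$ for every $i$), assigning one such pair to each $A_i$ reduces the problem to an equitable partition of $[n-8]$ with part sizes $p_i' = p_i - 2$, since a direct computation gives $s^{n,4} - (2n-7) = s^{n-8,4}$ and preserves the divisibility condition on $s^{n,4}$. The technical check is that (\ref{nec_cond}) propagates: for each $j$, the inequality $\sum_{i=1}^{P_j'}((n-8) - i + 1) \ge j\, s^{n-8,4}$ with $P_j' = P_j - 2j$ must follow from the original, which I expect to reduce to an elementary identity.

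The base cases are $p_1 \in \{2, 3\}$ together with small values of $n$. Small $n$ can be handled by finite enumeration. For $p_1 \in \{2, 3\}$, the sum constraint on $A_1$ leaves very little flexibility; in particular, when $p_1 = 2$ the part $A_1$ is forced to be a pair $\{a, s^{n,4} - a\}$ with $a$ ranging over a restricted range dictated by (\ref{nec_cond}) for $j = 1$. I would peel off $A_1$ and attempt to apply the $k = 3$ result of \cite{miller2003} to the complement. Since $[n] \setminus A_1$ is not an initial segment of the positive integers, I would first establish an auxiliary lemma extending the Miller--Rodger--Simanjuntak $k = 3$ theorem to subsets of $[n]$ that differ from $[m]$ by a bounded number of swaps, via a controlled swapping argument on the existing $k=3$ construction.

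The principal obstacle is the base case $p_1 = 2$: even with the auxiliary $k = 3$ lemma, the configurations of $p_2, p_3, p_4$ split into many subcases, and near-tightness of (\ref{nec_cond}) at $j = 1, 2$ forces near-unique choices of $A_1$ and $A_2$ that cascade constraints onto $A_3$ and $A_4$. A secondary but unavoidable obstacle is the careful bookkeeping required to verify that (\ref{nec_cond}) propagates under the inductive reduction for all four values of $j$, especially when one of the reduced $p_i' $ drops into the base-case regime and the induction must be ``flushed'' through the base-case machinery.
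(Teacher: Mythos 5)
Your proposal takes a genuinely different route from the paper (which argues by taking a minimal counterexample --- a partition minimizing $d(\A)$ and then the ``width'' --- and deriving a contradiction through a sequence of local exchange lemmas), but the inductive step as you describe it has a concrete, fatal gap: condition (\ref{nec_cond}) does \emph{not} propagate under your reduction. Writing $P=P_j$, a direct computation gives
\begin{equation*}
\Bigl(\sum_{i=1}^{P-2j}\bigl((n-8)-i+1\bigr)-j\,s^{n-8,4}\Bigr)
=\Bigl(\sum_{i=1}^{P}(n-i+1)-j\,s^{n,4}\Bigr)-2(4-j)(P_j-j),
\end{equation*}
and the correction term $2(4-j)(P_j-j)$ is strictly positive for $j<4$ whenever $p_i\ge 2$. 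So the reduced instance satisfies (\ref{nec_cond}) only if the original inequality holds with slack at least $6(p_1-1)$ at $j=1$ (and $4(P_2-2)$, $2(P_3-3)$ at $j=2,3$), which the hypothesis does not provide. This is not a bookkeeping issue that can be patched: take $n=24$ and $\ip=\{4,6,7,7\}$, where $s^{24,4}=75$ and (\ref{nec_cond}) holds for all $j$ (the values are $90\ge75$, $195\ge150$, $272\ge225$, $300\ge300$). Your reduction produces $n'=16$, $\ip'=\{2,4,5,5\}$ with $s^{16,4}=34$, and the $j=1$ condition reads $16+15=31\ge 34$, which is false; by the (already known) necessity of (\ref{nec_cond}), the reduced instance has no equitable partition at all, so the recursion cannot be completed. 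The underlying reason is that peeling off one pair of sum $2n-7$ from each part is too rigid: when (\ref{nec_cond}) is close to tight at $j=1$, the small part must absorb a disproportionate share of the largest labels (e.g.\ $A_1=\{24,23,22,6\}$ in the example), which a uniform top-$8$ peeling can never arrange.

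A secondary weakness is the base case: the claimed auxiliary lemma extending the $k=3$ theorem to ground sets that are not initial segments of $\mathbb{Z}_{>0}$ is exactly where the combinatorial difficulty lives, and nothing in the $k=3$ proof of \cite{miller2003} transfers to such ground sets without essentially redoing the analysis; you acknowledge this but offer no mechanism. By contrast, the paper avoids both problems by never constructing the partition directly: it shows that a non-equitable partition minimizing $d(\A)$ (and then the width $\omega(\A)$) forces a rigid arrangement of consecutive integers across the four parts (Lemmas~\ref{lemma:3}--\ref{lemma:7}) that contradicts either (\ref{nec_cond}) or minimality. If you want to salvage an inductive construction, you would need a reduction that removes unequal numbers of large elements from the parts, calibrated to the slack in (\ref{nec_cond}); as stated, your step fails.
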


The approach used for proving Theorem~\ref{thm1} also provides simple proofs for the cases $k=2,3$ (Remarks~\ref{rem:1} and \ref{rem:2}).

Thus, the $k$-partite complete graphs for which a magic distance labeling exits are characterized for $k\le 4$. This also solves the above mentioned closed distance magic labeling problem from \cite{An2015} for $k=4$.

Note that the case where $p_1=1$ is left out in Conjecture~\ref{conj1}, as this case is different and straightforward. Suppose $p_1=1$. In order for a partition which implements $\{p_i\}_{i=1}^k$ to be equitable, the set of size 1 must be $\{n\}$, and thus, $n=s^{n,k}$, which is equivalent to $k=(n+1)/2$. Since there can be only one set of size 1 in an equitable partition, all the other sets must be of size 2. Thus, we can partition the remaining $[n-1]$ into $k-1$ pairs, in the obvious way, to get an equitable partition.

\section{Preliminary results and notation}
We first introduce some notation and definitions and prove some preliminary results for general $k$. We assume that $k$ and $n$ are such that $s^{n,k}$ is an integer.

\begin{notation}
For a partition $\A=\{A_1,A_2,\dots,A_k\}$ of $[n]$, we denote $d(\A)=\sum_{i=1}^k (S(A_i)-s^{n,k})^2$.
\end{notation}

For any set $A$ and an element $x$ we use the notation $A-x$ for $A\setminus\{x\}$ and $A\cup x$ for $A\cup\{x\}$.

\begin{notation}
Let $\A=\{A_1,A_2,\dots,A_k\}$ be a partition of $[n]$ and suppose $a\in A_i$ and $b\in A_j$, where $a<b$ and $i\ne j$. We shall denote by $\chi_{a,b}$ the operator that acts on $\A$ by exchanging $a$ and $b$ between $A_i$ and $A_j$. The result is a new partition $\chi_{a,b}(\A)$, which we shall denote $\A_{a,b}$, where $A_i$ is replaced by $A_i-a\cup b$ and $A_j$ is replaced by $A_j-b\cup a$.
\end{notation}

\begin{observation}\label{obs:1}
Let $\A=\{A_1,A_2,\dots,A_k\}$ be partition of $[n]$. Suppose $a\in A_i$ and $b\in A_j$ where $i\ne j$. Then, $d(\A_{a,b})=d(\A)$ if $b-a=S(A_j)-S(A_i)$, $d(\A_{a,b})>d(\A)$ if $b-a > S(A_j)-S(A_i)$, and $d(\A_{a,b})<d(\A)$ if $b-a < S(A_j)-S(A_i)$.
\end{observation}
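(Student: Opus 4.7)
The plan is to reduce everything to an elementary quadratic identity. The operator $\chi_{a,b}$ touches only the two sets $A_i$ and $A_j$; every other summand in $d(\A)=\sum_{\ell}(S(A_\ell)-s^{n,k})^2$ is unchanged. So $d(\A_{a,b})-d(\A)$ collapses to a difference of two terms, and the whole statement is about the sign of that difference.

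First I would set $\delta=b-a>0$ and $D=S(A_j)-S(A_i)$, and introduce the shorthand $x_i=S(A_i)-s^{n,k}$, $x_j=S(A_j)-s^{n,k}$, so that $x_j-x_i=D$. After the swap, the new sums are $S(A_i)+\delta$ and $S(A_j)-\delta$, and hence the new deviations are $x_i+\delta$ and $x_j-\delta$. A direct expansion gives
\[
d(\A_{a,b})-d(\A)=(x_i+\delta)^2+(x_j-\delta)^2-x_i^2-x_j^2=2\delta(x_i-x_j)+2\delta^2=2\delta(\delta-D).
\]
Since $\delta>0$, the sign of the right hand side is exactly the sign of $\delta-D$, which gives the three cases of the observation: equality when $b-a=S(A_j)-S(A_i)$, strict increase when $b-a>S(A_j)-S(A_i)$, and strict decrease when $b-a<S(A_j)-S(A_i)$.

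There is no real obstacle here; the observation is a one-line identity dressed up in swap notation. The only thing worth being careful about is the sign convention: because we insist $a<b$ (so $\delta>0$) and $a$ moves into $A_j$ while $b$ moves into $A_i$, the deviation of $A_i$ increases by $\delta$ and that of $A_j$ decreases by $\delta$, which is what makes the factor $\delta-D$ (rather than $\delta+D$ or its negative) the one that controls the sign.
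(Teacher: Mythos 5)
Your proof is correct and is essentially the paper's own argument: the paper likewise writes $t=b-a$, $u=S(A_j)-S(A_i)$ and records the identity $d(\A_{a,b})=d(\A)+2t(t-u)$, which is exactly your $2\delta(\delta-D)$. You simply spell out the expansion and the sign discussion that the paper leaves implicit.
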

\begin{proof}
Let $t=b-a$ and $u=S(A_j)-S(A_i)$. We have $d(\A_{a,b})=d(\A)+2t(t-u)$.
\end{proof}

\begin{definition}
For a given sequence $\ip=\{p_i\}_{i=1}^k$ satisfying $\sum_{i=1}^k p_i=n$, let $d^{\ip}_{\textrm{min}}$ be the minimal value of $d(\A)$ over all partitions of $[n]$ which implement $\ip$.
A partition $\A$ implementing  $\ip$, such that $d(\A)=d^{\ip}_{\textrm{min}}$ will be called a \emph{minimal partition}.
\end{definition}

\begin{definition}
Let $\A$ be a partition of $[n]$ and let $A\in\A$. We say that $A$ is \emph{low} if $S(A)<s^{n,k}$, $A$ is \emph{high} if $S(A)>s^{n,k}$, and $A$ is \emph{exact} if $S(A)=s^{n,k}$.
\end{definition}

\begin{observation}\label{obs:2}
Suppose there is no equitable partition of $[n]$ implementing $\{p_i\}_{i=1}^k$ and let $\A$ be a minimal partition. Then there is no $c\in [n]$ such that $c$ is in a low set of $\A$ and $c+1$ is in a high set.
\end{observation}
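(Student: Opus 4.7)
The plan is a short contradiction argument that uses Observation~\ref{obs:1} directly, together with the integrality of $s^{n,k}$.

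Suppose for contradiction that there exist $c,c+1\in[n]$ with $c\in A_i$, $c+1\in A_j$, $S(A_i)<s^{n,k}$ and $S(A_j)>s^{n,k}$. I would apply the swap operator $\chi_{c,c+1}$ to $\A$. Setting $a=c$, $b=c+1$ in the notation of Observation~\ref{obs:1}, I get $t=b-a=1$ and $u=S(A_j)-S(A_i)$. The key observation is that because $s^{n,k}$ is an integer, $S(A_i)\le s^{n,k}-1$ and $S(A_j)\ge s^{n,k}+1$, so $u\ge 2>1=t$. Observation~\ref{obs:1} then gives $d(\A_{c,c+1})<d(\A)$.

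It remains to note that $\A_{c,c+1}$ still implements $\{p_i\}_{i=1}^k$: swapping an element between two blocks leaves all block sizes unchanged, so the multiset of sizes is preserved and the same bijection to $\ip$ still works. Hence $\A_{c,c+1}$ is an implementing partition with strictly smaller value of $d$, contradicting the assumption that $\A$ is minimal.

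There is no real obstacle here; the only subtle point is that the strict inequalities $S(A_i)<s^{n,k}<S(A_j)$ together with the fact that all sums involved are integers force the gap $u\ge 2$, which is what allows the unit swap $t=1$ to strictly decrease $d$. The assumption that no equitable partition exists is not actually needed for this particular observation, but it justifies why one would be looking at minimal partitions with some low and some high block to begin with (since otherwise $\A$ itself would be equitable).
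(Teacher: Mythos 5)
Your proof is correct and follows essentially the same route as the paper: apply $\chi_{c,c+1}$ and invoke Observation~\ref{obs:1} with $t=1<u=S(A_j)-S(A_i)$ to contradict minimality. The only difference is that you make explicit the integrality argument behind $u\ge 2$ (which the paper states as $S(A_j)-S(A_i)>1$ without comment) and the preservation of block sizes, both of which are fine.
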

\begin{proof}
Assume the contrary, so that $c\in A_i$ and $c+1\in A_j$ where $A_i$ is a low set and $A_j$ is a high set. We have $S(A_j)-S(A_i)>1$. So, $d(\A_{c,c+1})<d(\A)$, by Observation~\ref{obs:1}, contradicting the minimality of $\A$.
\end{proof}

\begin{lemma}\label{lemma:1}
Let $\ip=\{p_i\}_{i=1}^k$ be such that $\sum_{i=1}^kp_i=n$ and satisfies (\ref{nec_cond}). Suppose there is no equitable partition implementing $\ip$ and let $\A$ be a minimal partition of $[n]$ implementing $\ip$. Then, there exists a number $t$ in a low set of $\A$ such that $t+1$ is in an exact set of $\A$ and there exists a number $s$ in an exact set of $\A$ such that $s+1$ in a high set of $\A$.
\end{lemma}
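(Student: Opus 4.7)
My plan is to establish each of the two existence claims by contradiction, combining Observation~\ref{obs:2} with hypothesis (\ref{nec_cond}) and the complementary inequality that follows from it. First, I observe that both a low and a high set must be present in $\A$: since $\A$ is not equitable, some set is non-exact, and then $\sum_{A\in\A} S(A) = k s^{n,k}$ forces sets of both types to appear. Write $\ell$ and $h$ for the numbers of low and high sets, so that $1 \le \ell, h \le k-1$.

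For the low-to-exact claim, I will suppose that no element of a low set has its successor in an exact set. Combined with Observation~\ref{obs:2}, this forces the successor of any low-set element to again lie in a low set, so the low sets together form a tail $\{c_0, c_0+1,\ldots, n\}$ of $[n]$ for some $c_0$. This tail has at least $P_\ell$ elements (its size is a sum of $\ell$ of the $p_i$'s, hence at least the sum of the smallest $\ell$), so in particular it contains the $P_\ell$ largest elements of $[n]$. Hypothesis (\ref{nec_cond}) at $j = \ell$ then forces the tail's sum to be at least $\ell s^{n,k}$, contradicting the fact that each low set has sum at most $s^{n,k} - 1$, whence the $\ell$ low sets total strictly less than $\ell s^{n,k}$.

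For the exact-to-high claim, I will run the mirror argument. Supposing no element of an exact set has its successor in a high set and again applying Observation~\ref{obs:2}, every predecessor of a high-set element is itself in a high set, so the high sets form an initial segment $\{1,2,\ldots,m\}$ of $[n]$ whose size $m$ is at most the sum of the $h$ largest $p_i$'s, namely $\tilde P_h := n - P_{k-h}$. The complementary form of (\ref{nec_cond}), obtained by subtracting the inequality at index $k-h$ from $n(n+1)/2 = k s^{n,k}$, bounds the sum of the smallest $\tilde P_h$ elements of $[n]$ by $h s^{n,k}$. Hence the high sets' total sum $m(m+1)/2 \le \tilde P_h(\tilde P_h+1)/2$ is at most $h s^{n,k}$, which contradicts each high set having sum strictly greater than $s^{n,k}$.

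The only nontrivial step is recognising that (\ref{nec_cond}) at index $k-h$ is equivalent, via complementation in $[n]$, to the upper bound on the sum of the smallest $\tilde P_h$ elements needed in the second half; after that, both halves reduce to careful bookkeeping on the tail or head structure that Observation~\ref{obs:2} forces.
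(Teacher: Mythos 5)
Your proof is correct and follows essentially the same route as the paper: Observation~\ref{obs:2} forces the offending configuration to make the low sets a tail of $[n]$ (resp.\ the high sets an initial segment), and condition~(\ref{nec_cond}) then yields the contradiction. The only cosmetic difference is in the second half, where the paper applies (\ref{nec_cond}) at $j=\ell+e$ directly to the tail formed by the low and exact sets, while you apply it at the same index $k-h=\ell+e$ in complemented form to the initial segment of high sets.
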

\begin{proof}
Let $\A^l$ and  $A^e$ be the collections of low and exact sets of $\A$, respectively. Let $l=|\A^l|$ and $e=|A^e|$. Let $L=\sum_{A\in\A^l}|A|$ and $E=\sum_{A\in\A^e}|A|$.
If all the elements in the low sets of $\A$ are greater than all the elements in the other sets, then  $\bigcup_{A\in\A^l}A=\{n-L+1,\dots,n-1,n\}$, and we have

\begin{equation*}
    \sum_{i=1}^{P_l} (n-i+1) = \sum_{i=1}^{p_1+\cdots +p_l} (n-i+1) < l\cdot s^{n,k},
\end{equation*}

contradicting (\ref{nec_cond}). So there must be an element $t$ in a low set such that $t+1$ is not in a low set. By Observation~\ref{obs:2}, $t+1$ must be in an exact set.
Now, suppose, for contradiction, that all the elements in the low and exact sets of $\A$ are greater than all the elements in the high sets of $\A$. We have

\begin{equation*}
    \sum_{i=1}^{P_{l+e}} (n-i+1) = \sum_{i=1}^{p_1+\cdots +p_{l+e}} (n-i+1) < (l+e) s^{n,k},
\end{equation*}

contradicting (\ref{nec_cond}). So, there must be $s$ in a low or exact set such that $s+1$ is in a high set. By Observation~\ref{obs:2}, $s$ must be in an exact set.
\end{proof}

\begin{remark}\label{rem:1}
It follows from Lemma~\ref{lemma:1} that if $\ip$ satisfies (\ref{nec_cond}) and there is no equitable partition implementing $\ip$, then $k\ge 3$. This implies Conjecture~\ref{conj1} for $k=2$.
\end{remark}

\begin{lemma}\label{lemma:2}
Let $\ip=\{p_i\}_{i=1}^k$ be such that $\sum_{i=1}^kp_i=n$ and satisfies (\ref{nec_cond}). If there is no equitable partition implementing $\ip$, then every minimal partition implementing $\ip$ contains at least one exact set, one low set with sum $s^{n,k}-1$, and one high with sum $s^{n,k}+1$.
\end{lemma}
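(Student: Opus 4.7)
The plan is to extract the three required sets directly from Lemma~\ref{lemma:1} by applying Observation~\ref{obs:1} to the adjacent-pair swaps that lemma guarantees. The existence of an exact set is immediate from Lemma~\ref{lemma:1} (both $t+1$ and $s$ lie in exact sets), so the content of the lemma is really the two tight sum equalities $s^{n,k}-1$ and $s^{n,k}+1$.

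First I would invoke Lemma~\ref{lemma:1} to obtain $t$ in a low set $A_i$ with $t+1$ in an exact set $A_j$. Then I would consider the swap $\chi_{t,t+1}$. Here $b-a = 1$ and $u := S(A_j)-S(A_i) = s^{n,k}-S(A_i) \ge 1$ since $A_i$ is low and $A_j$ is exact. By Observation~\ref{obs:1}, $d(\A_{t,t+1}) = d(\A) + 2(1-u)$. Minimality of $\A$ forces $u \le 1$, hence $u = 1$, which gives $S(A_i) = s^{n,k}-1$. Thus $A_i$ is the required low set with sum $s^{n,k}-1$.

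Next, by Lemma~\ref{lemma:1} there is also $s$ in an exact set $A_{j'}$ with $s+1$ in a high set $A_{i'}$. Applying Observation~\ref{obs:1} to the swap $\chi_{s,s+1}$, we again have $b-a = 1$ and now $u := S(A_{i'}) - S(A_{j'}) = S(A_{i'}) - s^{n,k} \ge 1$. The same computation $d(\A_{s,s+1}) = d(\A) + 2(1-u)$ together with minimality of $\A$ forces $u = 1$, so $S(A_{i'}) = s^{n,k}+1$, giving the required high set.

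I do not foresee a serious obstacle: the whole argument is a direct consequence of Lemma~\ref{lemma:1} plus the quantitative form of Observation~\ref{obs:1} applied to consecutive integers, where $b-a=1$ makes the inequality between $u$ and $b-a$ sharp enough to pin the sums down exactly. The only point requiring a line of care is noting that the hypotheses $A_i$ low, $A_j$ exact (respectively $A_{j'}$ exact, $A_{i'}$ high) force $u \ge 1$, so that combined with the minimality bound $u \le 1$ we get $u = 1$ rather than merely $u \in \{0,1\}$.
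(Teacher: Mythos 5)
Your proof is correct and follows essentially the same route as the paper: both extract the pairs $(t,t+1)$ and $(s,s+1)$ from Lemma~\ref{lemma:1} and then use Observation~\ref{obs:1} together with minimality of $\A$ to force the low and high sums to be exactly $s^{n,k}-1$ and $s^{n,k}+1$. Your write-up merely makes explicit the quantitative computation $d(\A_{a,a+1})=d(\A)+2(1-u)$ that the paper leaves implicit.
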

\begin{proof}
Let $\A$ be a minimal partition implementing $\ip$. By Lemma~\ref{lemma:1}, there must be a low set $X$, an exact set $Z$ and a number $a\in X$ such that $a+1\in Z$ and an exact set $W$, a high set $Y$, and a number $b\in W$ such that $b+1\in Y$. Since $\A$ is minimal, we must have $S(X)=s^{n,k}-1$ and $S(Y)=s^{n,k}+1$, by Observation~\ref{obs:1}.
\end{proof}

In the discussion ahead we shall use diagrams with two horizontal lines. High sets with sum $s^{n,k}+1$ appear above the lines, exact sets appear between the lines, and low sets with sum $s^{n,k}-1$ are drawn below the lines. An arrow points from a number to its successor. For example, Figure~\ref{fig1} illustrates the setup in the proof of Lemma~\ref{lemma:2}.

\begin{figure}[h!]
\begin{center}
\includegraphics[scale=0.35]{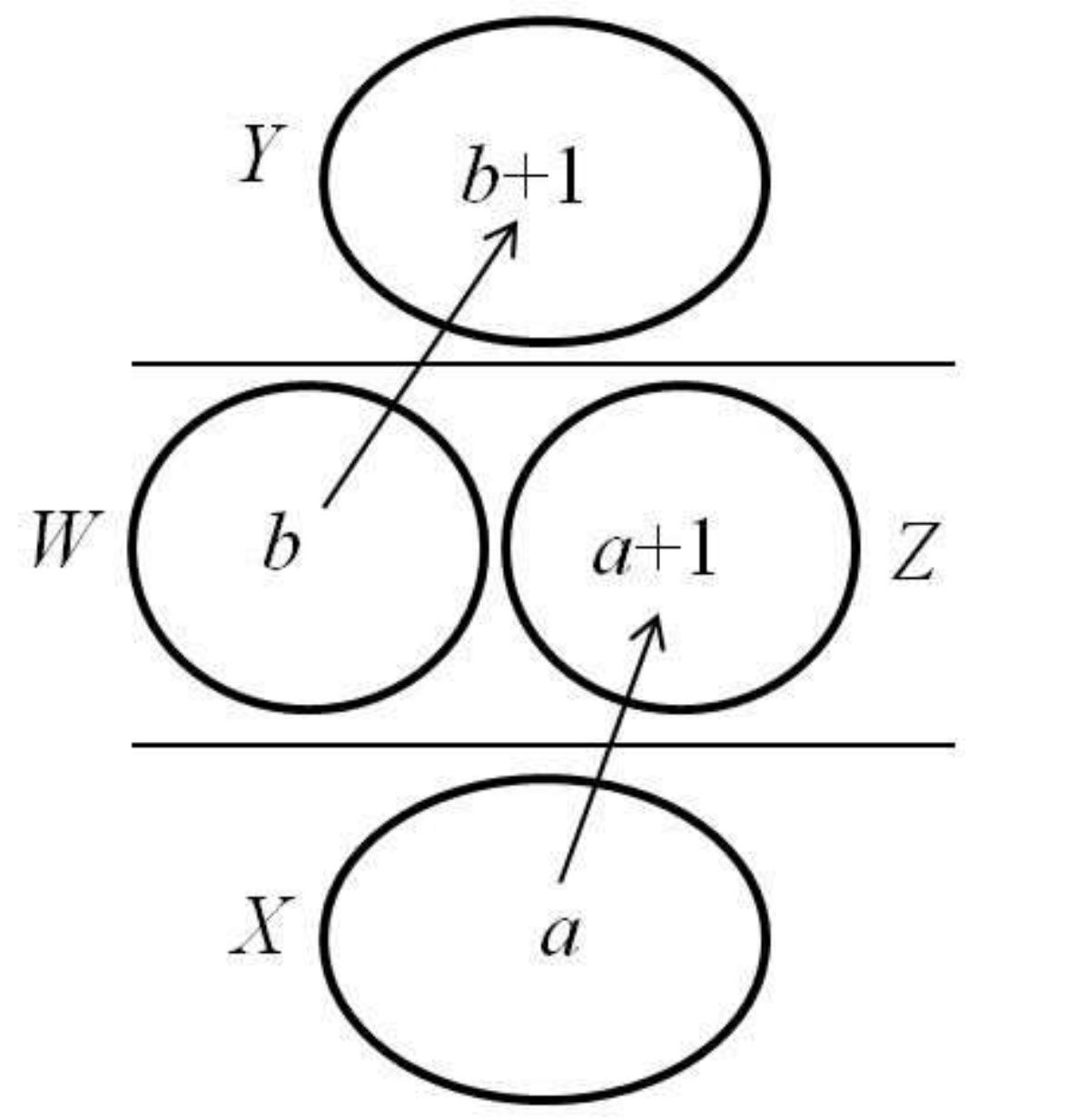}
\end{center}
\caption{}
\label{fig1}
\end{figure}

\begin{definition}
We say that two partitions $\A$ and $\A'$ of $[n]$ are \emph{equivalent}, denoted $\A\equiv \A'$, if there exists a bijection $f:\A\rightarrow\A'$ such that $S(f(A))=S(A)$ for all $A\in \A$.
\end{definition}

Clearly, if $\A\equiv \A'$, then $d(\A)= d(\A')$.

\begin{lemma}\label{lemma:3}
If $\A$ is a minimal partition then the configurations illustrated in Figure~\ref{fig2} are not possible, given that $b\ne a+1$ in all figures, $c\ne b+1$ in Figure~\ref{fig2b}, $x\ne b+1$ in Figures~\ref{fig2c} and \ref{fig2e}, and $x+2\ne a$ in Figures~\ref{fig2d} and \ref{fig2e}.
\end{lemma}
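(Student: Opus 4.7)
The plan is, for each of the five configurations in Figure~\ref{fig2}, to assume it occurs in the minimal partition $\A$ and derive a contradiction by producing, from the labelled successor pairs in the diagram, a composition of exchanges $\chi$ whose net effect strictly decreases $d(\A)$. By Observation~\ref{obs:1}, a single exchange $\chi_{p,q}$ with $p \in A_i$, $q \in A_j$ strictly decreases $d$ exactly when $q-p < S(A_j)-S(A_i)$, and preserves $d$ when equality holds.

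Lemma~\ref{lemma:2} tells us that in a minimal, non-equitable partition implementing $\ip$ the low sets have sum $s^{n,k}-1$ and the high sets have sum $s^{n,k}+1$, so the only sum-differences available between two sets are $0$, $1$ and $2$. In particular, a single exchange between a low and a high set strictly decreases $d$ precisely when $q=p+1$, and this case is already ruled out by Observation~\ref{obs:2}. Consequently, for the configurations of Figure~\ref{fig2} the required contradiction must come from a composition of at least two exchanges along successor arrows; typically each individual exchange preserves $d$ (the equality case of Observation~\ref{obs:1}), while the composite either strictly decreases $d$ or produces an equitable partition outright.

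Concretely, for each configuration I propose to identify a pair of successor-exchanges $\chi_{a,a+1}$, $\chi_{b,b+1}$ (and analogues involving $c$ or $x$, or occasionally a longer step such as $\chi_{x,x+2}$) and verify that their composition moves a low set up by $1$ and a high set down by $1$, yielding two new exact sets while the intermediate exact sets either remain exact or merely swap roles. The side conditions $b \ne a+1$, $c \ne b+1$, $x \ne b+1$, and $x+2 \ne a$ are precisely what guarantees that the elements involved in the two successive exchanges are distinct, so the composite is a genuine non-trivial rearrangement and the sum changes can be tracked cleanly; whenever one of these equalities holds, the two exchanges collapse or interfere, and the argument reverts to the situations already handled by Observation~\ref{obs:2} or Lemma~\ref{lemma:2}.

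The main obstacle will be the bookkeeping: for each of the five configurations one must check that the proposed composition of exchanges is well-defined, that the stated side conditions exclude precisely the degenerate overlaps, and that the resulting set sums match the claim. Once the pattern has been recognised in the first case, the remaining cases should follow by essentially the same principle with the roles of $a$, $b$, $c$, $x$ permuted and the choice of successor arrows adapted to the diagram; no new ideas beyond Observations~\ref{obs:1} and \ref{obs:2} and Lemma~\ref{lemma:2} are expected to be needed.
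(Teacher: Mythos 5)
Your overall strategy is the paper's own: starting from each forbidden configuration, apply sum-preserving exchanges along the successor arrows (the equality case of Observation~\ref{obs:1}, so $d$ is unchanged and the partition stays minimal) until one reaches a partition containing an element of a low set whose successor lies in a high set, at which point Observation~\ref{obs:2} contradicts minimality. Your accounting of when a single exchange preserves or strictly decreases $d$, including the role of a step $\chi_{x,x+2}$ between the low set and the high set (sum difference $2$), is correct, as is your reading of the side conditions as excluding the degenerate overlaps in which the element one wants to exploit is itself displaced by an earlier exchange. (One small caveat: the uniform sums $s^{n,k}\pm 1$ for the low and high sets in these configurations come from the paper's diagram convention together with Lemma~\ref{lemma:2}, not from Lemma~\ref{lemma:2} alone, which only guarantees the existence of \emph{one} such low and \emph{one} such high set for general $k$.)

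The genuine gap is that the proof is never executed: for none of the five configurations do you specify which exchanges to apply, in what order, or verify that the result is the pattern forbidden by Observation~\ref{obs:2}. That verification is the entire content of the lemma, and it is not mere bookkeeping, because the cases must be chained in the right order. Concretely, the paper handles Figure~\ref{fig2a} directly: $\chi_{a,a+1}$ turns the exact set containing $b$ into a low set while $b+1$ remains in a high set, and the hypothesis $b\ne a+1$ is exactly what guarantees $b$ is not the element that moved. Then Figure~\ref{fig2b} is reduced to \ref{fig2a} by $\chi_{c,c+1}$, Figures~\ref{fig2c} and \ref{fig2d} are reduced to \ref{fig2b} by $\chi_{x,x+2}$, and Figure~\ref{fig2e} is reduced to \ref{fig2a} by $\chi_{x,x+2}$. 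Note also that your description of the composite as "moving a low set up by $1$ and a high set down by $1$, yielding two new exact sets" is not what happens along the way: every exchange actually performed leaves all sums unchanged, and the contradiction is obtained by invoking Observation~\ref{obs:2} on the resulting equivalent partition rather than by exhibiting a decrease of $d$ or an equitable partition explicitly. Your plan would succeed if these reductions were written out, but as it stands no configuration has been eliminated.
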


 \begin{figure}[h!]
  \centering
  \subfigure[]{\label{fig2a}\includegraphics[scale=0.25]{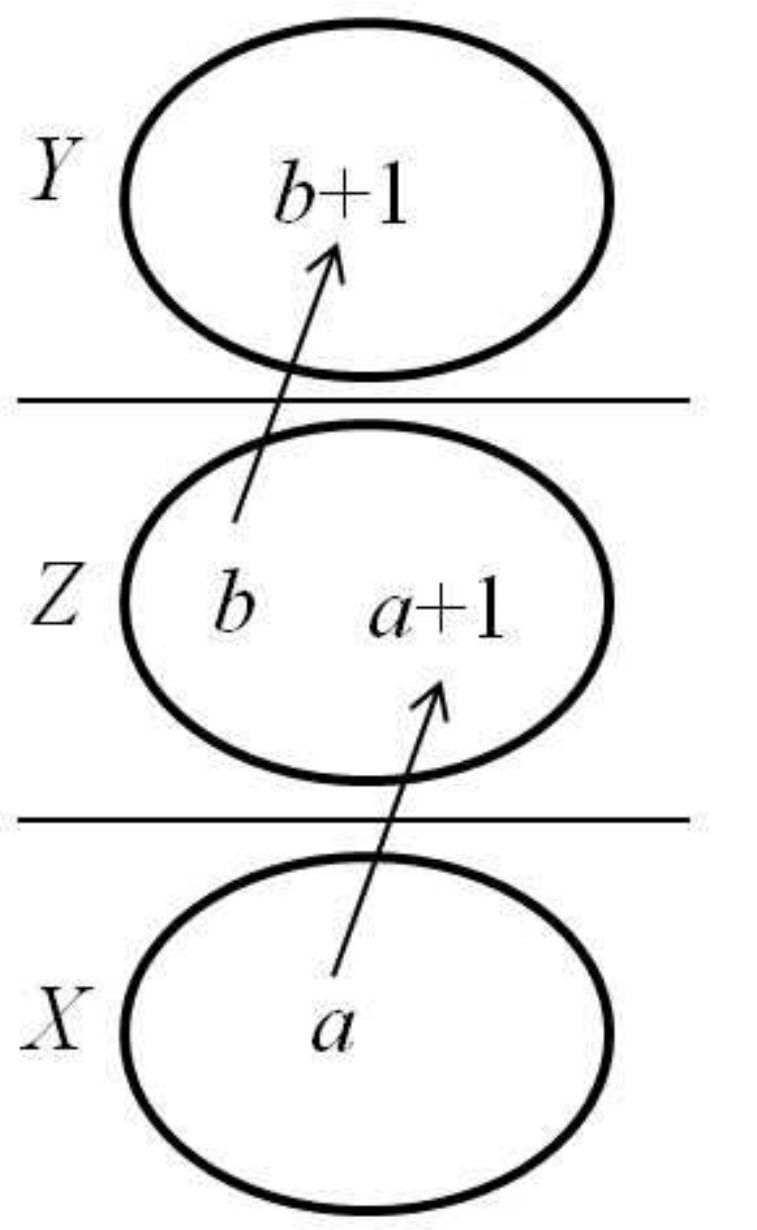}}
  \subfigure[]{\label{fig2b}\includegraphics[scale=0.25]{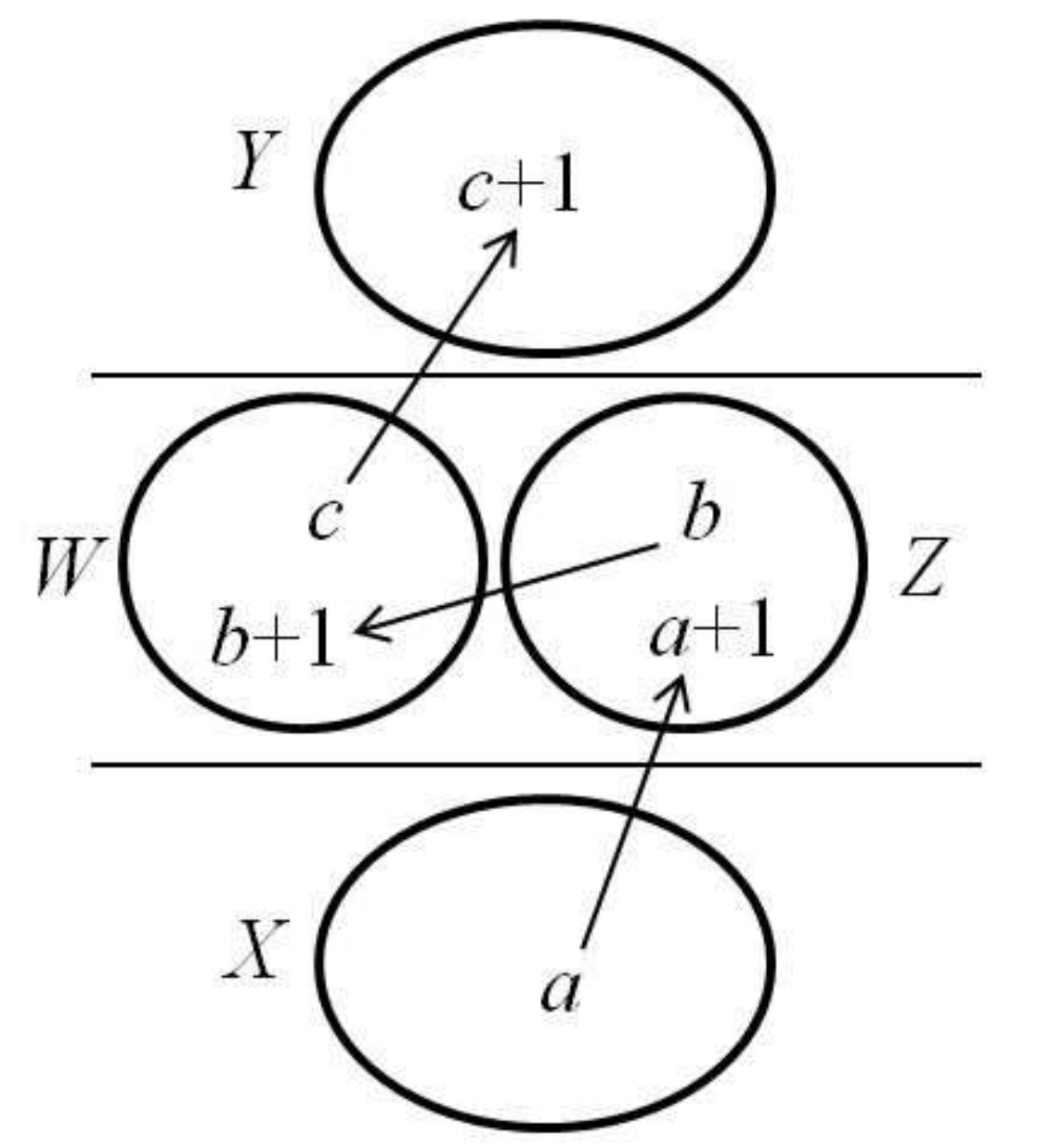}}
  \subfigure[]{\label{fig2c}\includegraphics[scale=0.25]{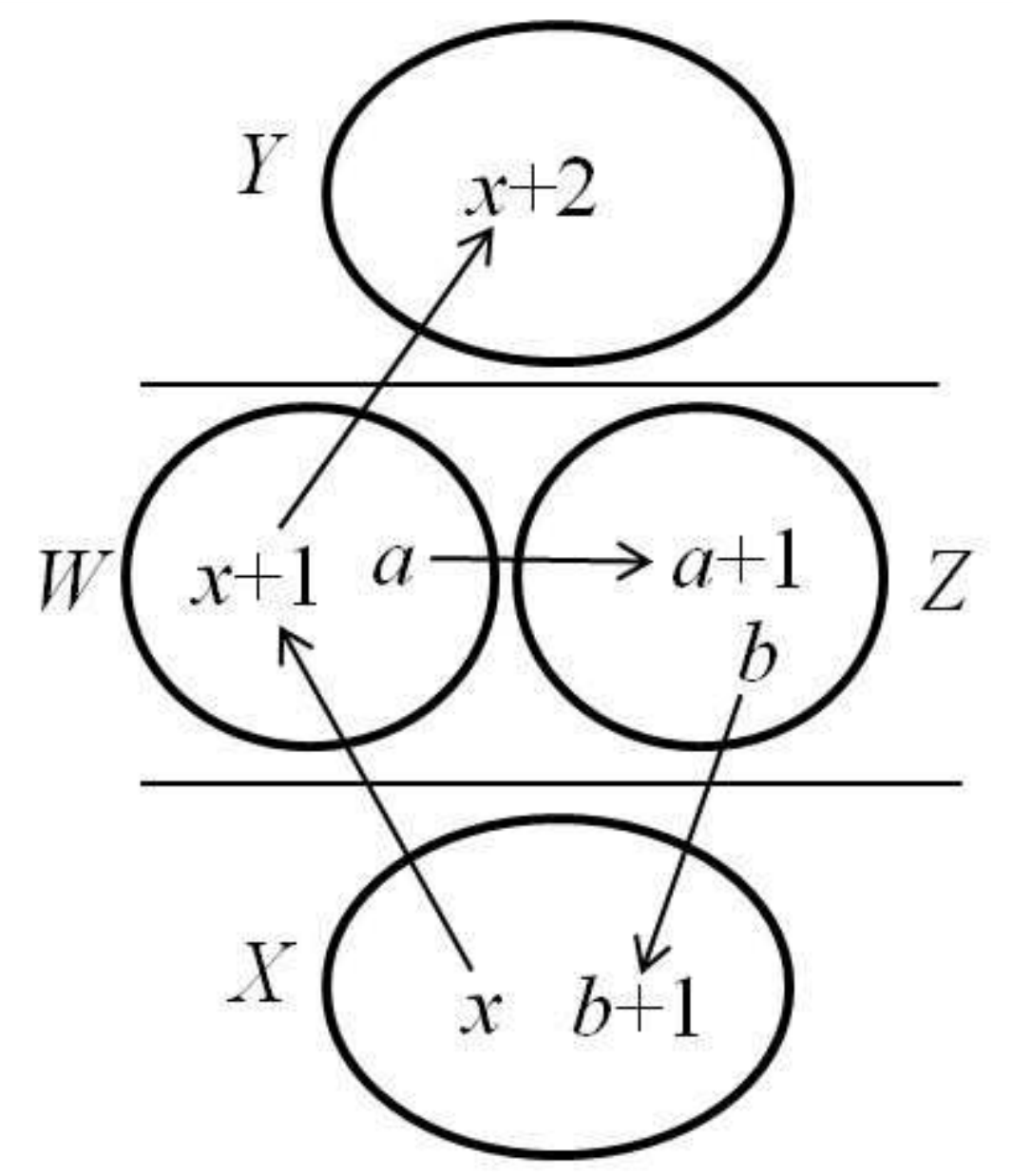}}
  \subfigure[]{\label{fig2d}\includegraphics[scale=0.25]{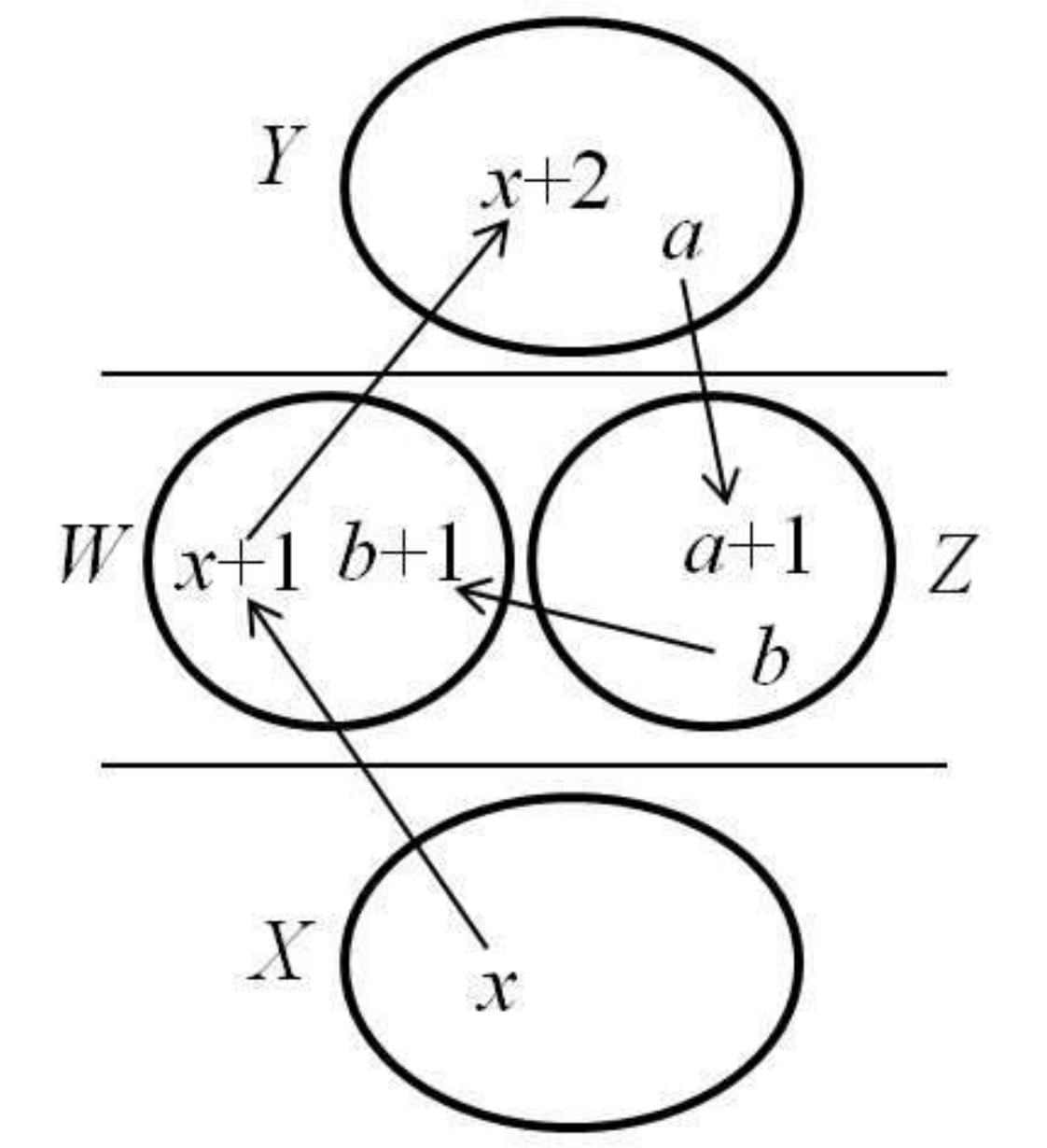}}
  \subfigure[]{\label{fig2e}\includegraphics[scale=0.25]{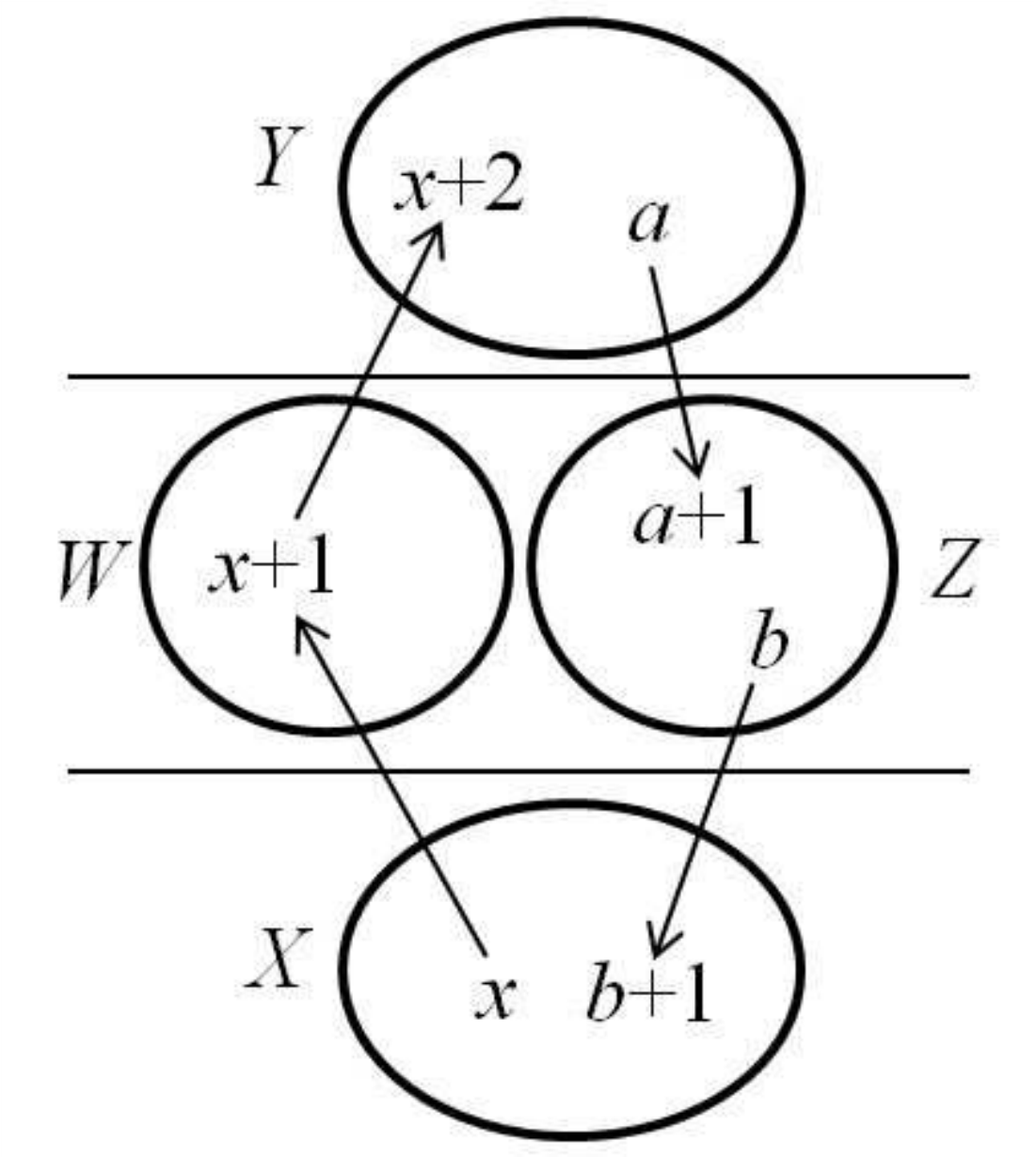}}
  \caption{}
  \label{fig2}
\end{figure}

\begin{proof}
Suppose the configuration in Figure~\ref{fig2a} exists. Applying $\chi_{a,a+1}$ we obtain an equivalent partition with $b$ in a low set and $b+1$ in a high set, contradicting Observation~\ref{obs:2}. Suppose the configuration in Figure~\ref{fig2b} exists. Applying $\chi_{c,c+1}$, the element $b+1$ gets pushed up to the high set and we obtain a configuration as in Figure~\ref{fig2a}. Suppose the configuration in Figure~\ref{fig2c} or Figure~\ref{fig2d} exists. Applying $\chi_{x,x+2}$, we obtain an equivalent partition with a configuration as in Figure~\ref{fig2b}. If the Configuration in Figure~\ref{fig2e} exits, then applying $\chi_{x,x+2}$ yields an equivalent partition with a configuration as in Figure~\ref{fig2a}.
\end{proof}

In general, if $A,B\in \A$, $S(B)=S(A)+1$, $a\in A$ and $a+1\in B$, then, clearly $\A_{a,a+1}\equiv\A$. Such actions of the form $\chi_{a,a+1}$ will be common in our discussion and we won't always mention the obvious fact that the partitions are equivalent.

\begin{remark}\label{rem:2}
Let $\ip=\{p_1,p_2,p_3\}$ be such that $p_1+p_2+p_3=n$ and satisfies (\ref{nec_cond}) for $k=3$. Suppose there is no equitable partition implementing $\ip$ and let $\A$ be a minimal partition implementing $\ip$. By Observation~\ref{obs:2}, Lemma~\ref{lemma:2} and Lemma~\ref{lemma:3}(a) we must have the setup illustrated in Figure~\ref{fig3}. Note that we may assume that $n\in A_3$ (if $n\in A_2$ we apply $\chi_{t+1,t+2}$ and if $n\in A_1$ we apply $\chi_{t+1,t+2}\circ\chi_{t,t+1}$). Let $s$ be the maximal element in $A_2-(t+1)$. Since $s\ne n$ and $n\not\in A_1$ we must have $s+1\in A_3$, by Observation~\ref{obs:2}. But this yields a configuration as in Lemma~\ref{lemma:3}(a). Thus, Conjecture~\ref{conj1} holds for $k=3$.
\end{remark}

\begin{figure}[h!]
\begin{center}
\includegraphics[scale=0.35]{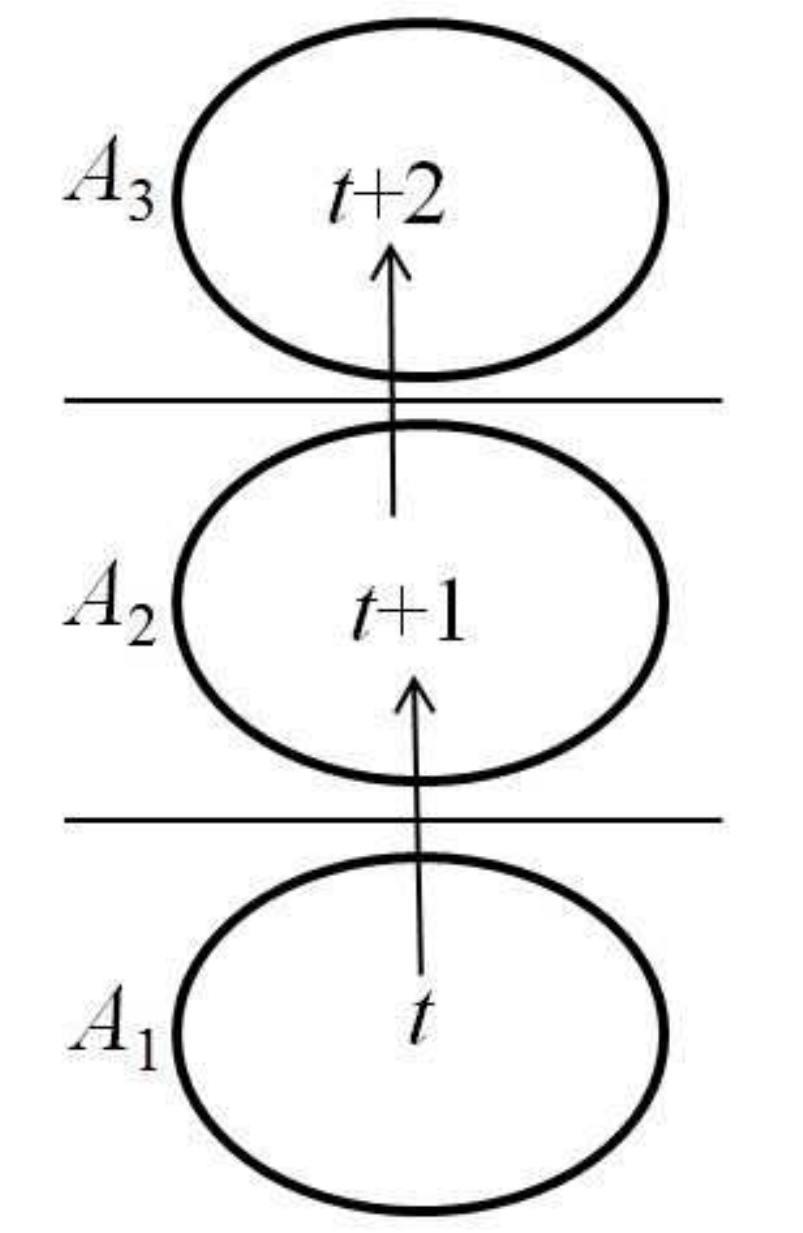}
\end{center}
\caption{}
\label{fig3}
\end{figure}

\section{Some lemmas for the case $k=4$}

\begin{lemma}\label{lemma:4}
Let $\ip=\{p_i\}_{i=1}^4$ be such that $\sum_{i=1}^4p_i=n$ and satisfies (\ref{nec_cond}). If there is no equitable partition of $[n]$ implementing $\ip$, then every minimal partition implementing $\ip$ has one low set with sum $s^{n,4}-1$, one high set with sum $s^{n,4}+1$, and two exact sets.
\end{lemma}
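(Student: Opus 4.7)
My plan is a short counting argument built on top of \lref{lemma:2} together with the global sum identity $\sum_{i=1}^n i = \binom{n+1}{2} = 4\, s^{n,4}$. First I would invoke \lref{lemma:2} to locate, inside any minimal partition $\A$ implementing $\ip$, three particular blocks: a low set $X$ with $S(X) = s^{n,4}-1$, an exact set $Z$ with $S(Z) = s^{n,4}$, and a high set $Y$ with $S(Y) = s^{n,4}+1$. Since $|\A|=4$, exactly one further block $W$ remains.

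Next I would pin down $S(W)$ by reading off the total. From
\[
S(X) + S(Y) + S(Z) + S(W) \;=\; \sum_{i=1}^n i \;=\; \binom{n+1}{2} \;=\; 4\, s^{n,4},
\]
and the fact that the first three terms contribute $3\, s^{n,4}$, we are forced into $S(W) = s^{n,4}$, so $W$ is exact as well. Hence the multiset of block-sums of $\A$ is exactly $\{s^{n,4}-1,\, s^{n,4},\, s^{n,4},\, s^{n,4}+1\}$, giving the claimed structure: one low set with sum $s^{n,4}-1$, one high set with sum $s^{n,4}+1$, and two exact sets.

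I do not anticipate any real obstacle here. All of the substance sits in \lref{lemma:2}; the rest is a pigeonhole remark that, once three of four summands are pinned and the total is prescribed, the fourth summand has no remaining freedom. I view this lemma mainly as preparatory: it rigidifies the shape of a hypothetical minimal non-equitable partition for $k=4$ (one low, two exact, one high) so that the subsequent case analysis in \tref{thm1} has a fixed configuration to attack.
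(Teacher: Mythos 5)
Your proposal is correct and is essentially identical to the paper's own proof: both invoke Lemma~\ref{lemma:2} to obtain the low set with sum $s^{n,4}-1$, the high set with sum $s^{n,4}+1$, and one exact set, and then use $\sum_{i=1}^n i = 4s^{n,4}$ to force the fourth set to be exact. No gaps.
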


\begin{proof}
By Lemma~\ref{lemma:2} there is one low set and one high set with the indicated properties, and one exact set. Since the sum of all the elements is $4s^{n,4}$, the fourth set must be exact.
\end{proof}

\begin{definition}
Let $\A$ be a partition of $[n]$ implementing the sequence $\{p_i\}_{i=1}^k$ and assume $\A$ is not equitable. We define the \emph{width} of $\A$, denoted $\omega(\A)$, as the minimal value of $y-x$ over all $x,y\in [n]$ such that $y>x$ and such that $y$ is in a high set of $\A$ and $x$ is in a low set. If there are no such $x$ and $y$ we define $\omega(\A)=\infty$.
\end{definition}

\begin{lemma}\label{lemma:5}
Let $\ip=\{p_i\}_{i=1}^4$ be such that $\sum_{i=1}^4p_i=n$ and satisfies (\ref{nec_cond}) for $k=4$. Suppose there is no equitable partition of $[n]$ implementing $\ip$. Then, there exists a minimal partition of $[n]$ with finite width.
\end{lemma}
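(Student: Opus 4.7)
I would attempt the proof by contradiction. Suppose every minimal partition of $[n]$ implementing $\ip$ has infinite width. Fix one such $\A$; by Lemma~\ref{lemma:4} it consists of a low set $L$ (sum $s^{n,4}-1$), a high set $H$ (sum $s^{n,4}+1$), and two exact sets $E_1,E_2$. Set $l=\min L$ and $h=\max H$, so that $h<l$ by hypothesis.

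Applying Lemma~\ref{lemma:1} to $\A$ yields an element $t\in L$ with $t+1\in E_i$, and an element $u\in E_j$ with $u+1\in H$, for some $i,j\in\{1,2\}$; since $\omega(\A)=\infty$ one has $u+1\le h<l\le t$. My first sub-goal is to show $E_i\ne E_j$. If they coincide and $u\ne t+1$, then $(t,t+1,u,u+1)$ realizes exactly the configuration of Figure~\ref{fig2a}, which is forbidden by Lemma~\ref{lemma:3}. If they coincide and $u=t+1$, then $u+1=t+2\in H$ together with $t\in L$ contradicts $\omega(\A)=\infty$ because $t+2>t$.

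Both swaps $\chi_{t,t+1}$ and $\chi_{u,u+1}$ preserve $d$ by Observation~\ref{obs:1} (the element-difference $1$ equals the sum-difference of the two sets involved), so the resulting partitions $\A':=\chi_{t,t+1}(\A)$ and $\A'':=\chi_{u,u+1}(\A)$ are again minimal. Applying the infinite-width assumption to $\A'$: the new low set $(E_i\setminus\{t+1\})\cup\{t\}$ must have minimum strictly greater than $h$, and since $t>h$ this forces every element of $E_i$ to lie in $[h+1,n]$. By the symmetric argument on $\A''$, $E_j\subseteq[1,l-1]$. Hence
\[
[1,h]=H\cup(E_j\cap[1,h]),\qquad [l,n]=L\cup(E_i\cap[l,n]),
\]
and $[h+1,l-1]$ is partitioned between $E_i$ and $E_j$.

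The main obstacle, which I expect to require the bulk of the effort, is turning this rigid decomposition into a contradiction. My plan is to iterate the swap-and-constrain argument: apply Lemma~\ref{lemma:1} to $\A'$ (or $\A''$), obtain a fresh pair $(t^*,u^*)$, and rerun the same analysis. Each iteration either sharpens the containment constraints on $E_1$ and $E_2$---eventually pushing one exact set entirely into $[1,h]$ or $[l,n]$, so that its sum cannot equal $s^{n,4}$ given the size lower bound $p_i>1$---or produces a configuration forbidden by one of the cases of Lemma~\ref{lemma:3} in the iterated partition, or violates (\ref{nec_cond}) at an appropriate index. The delicate bookkeeping is the case split depending on whether $h+1\in E_i$, $h+1\in E_j$, or $h+1=l$, and symmetrically at the upper boundary $l-1$; establishing that every case leads to an obstruction is the technical heart of the argument.
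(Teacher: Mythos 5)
Your setup is sound and runs parallel to the paper's own argument: the paper likewise reduces to the configuration $a\in A_1$, $a+1\in A_2$, $b\in A_3$, $b+1\in A_4$ with the two exact sets distinct, and your derivation that $E_i\subseteq[h+1,n]$ and $E_j\subseteq[1,l-1]$ is exactly the contrapositive of the paper's first two cases (if some element of $E_i$ lay below some element of $H$, the swap $\chi_{t,t+1}$ would already produce a minimal partition of finite width, and symmetrically for $E_j$ versus $L$). The gap is that you stop precisely at the decisive step and replace it with an open-ended iteration whose termination you do not establish; moreover, one of your proposed endgames --- that an exact set pushed entirely into $[1,h]$ or $[l,n]$ ``cannot have sum $s^{n,4}$'' --- is not justified and is false in general, since nothing prevents a subset of $[1,h]$ of size $p_i>1$ from summing to $s^{n,4}$.

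The missing step is short and needs no iteration. Given your containments, any pair $w<z$ with $w\in L\cup E_i$ and $z\in H\cup E_j$ must in fact have $w\in E_i$ and $z\in E_j$; and such a pair must exist, since otherwise every element of $L\cup E_i$ exceeds every element of $H\cup E_j$, so the largest $|L|+|E_i|$ elements of $[n]$ sum to $S(L)+S(E_i)=2s^{n,4}-1$, which (because $P_2\le |L|+|E_i|$) violates (\ref{nec_cond}) at $j=2$. Now apply $\chi_{t,t+1}\circ\chi_{u,u+1}$: the result is still minimal, $E_i$ becomes its low set and $E_j$ its high set, and the pair $w<z$ witnesses finite width; the degenerate possibilities $w=t+1$ and $z=u$ cannot occur here because $t+1>l>z$ and $u<h+1\le w$. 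This is exactly the paper's third case (there phrased as the existence of $z<w$ with $z\in A_2$, $w\in A_3$), so your argument is closable, but as written it is incomplete at the heart of the proof.
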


\begin{proof}
Let $\A=\{A_1,\dots ,A_4\}$ be a minimal partition of $[n]$ implementing $\{p_i\}_{i=1}^4$ and assume $\omega(\A)=\infty$. We shall show that there is an equivalent partition with finite width.
By Lemma~\ref{lemma:4}, we may assume that $S(A_1)= s^{n,4}-1$, $S(A_2)= S(A_3)=s^{n,4}$ and $S(A_4)= s^{n,4}+1$. By Lemma~\ref{lemma:1} there exist $a\in A_1$ such that $a+1\in A_2\cup A_3$, and $b\in A_2\cup A_3$ such that $b+1\in A_4$. We may assume that $a+1\in A_2$. If $b\in A_2$, then, by Lemma~\ref{lemma:3}(a), we must have $b= a+1$. But then $\omega(\A)\le 2$, contradicting our assumption. So, we must have $b\in A_3$. If there exist $z<w$ such that $z\in A_2$ and $w\in A_4$, then $z\ne a+1$, since $\omega(\A)=\infty$. Applying $\chi_{a,a+1}$ will yield an equivalent partition with finite width, since it has $z$ in a low set and $w$ in a high set. If there are $z<w$ such that $z\in A_1$ and $w\in A_3$, then $w\ne b$, since $\omega(\A)=\infty$, and applying $\chi_{b,b+1}$ will yield another equivalent partition with finite width. Thus, there must exist $z<w$ such that $z\in A_2$ and $w\in A_3$ (otherwise, all the elements of $A_1\cup A_2$ are greater than all the elements in $A_3\cup A_4$, contradicting (\ref{nec_cond})).  If $w= b$, then $z<b+1\in A_4$ and we have already considered such a setting. So we assume $w\ne b$. In this case we can apply $\chi{a,a+1}\circ\chi_{b,b+1}$ and obtain an equivalent partition with $w$ in the high set and $z$ in the low set, and thus, of finite width.
\end{proof}

\begin{lemma}\label{lemma:6}
Let $\A=\{X,Y,Z,W\}$ be a minimal partition of $[n]$ implementing $\{p_i\}_{i=1}^4$, of minimal width among all such minimal partitions. Assume that $S(X)=s^{n,4}-1, S(Y)=s^{n,4}+1$ and $S(Z)=S(W)=s^{n,4}$. Let $x\in X$ and $y\in Y$ satisfy $y-x=\omega(\A)$. Suppose $x+1\in A$ and $y-1\in B$, where $A,B\in \{Z,W\}$. Then
\begin{enumerate}
  \item [(i)] $y-x=2$ or 3, and $y-x=2$ if and only if $A=B$.
  \item [(ii)] $A-(x+1)$ contains no element $a$ such that $a-1\in X$
  \item [(iii)] $A-(x+1)$ contains no element $a$ such that $a+1\in (X-x)$.
  \item [(iv)] $B-(y-1)$ contains no element $b$ such that $b+1\in Y$
  \item [(v)] $B-(y-1)$ contains no element $b$ such that $b-1\in (Y-y)$.
  \item [(vi)] If $A=B$, then $Y-y$ contains no element $c$ such that $c+1\in X-x$.
\end{enumerate}
Figure~\ref{fig4} indicates in dotted lines the illegal configurations of (ii)-(vi) for the two cases implied by (i). Note that in Figure~\ref{fig4b} the cases (ii) and (iv) are already known from Lemma~\ref{lemma:3}(a).
\end{lemma}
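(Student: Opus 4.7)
The plan is to prove each of (i)--(vi) by contradiction: assuming the forbidden configuration exists, I construct an equivalent minimal partition whose width is strictly smaller than $\omega(\A)$, contradicting the choice of $\A$ as a minimum-width minimal partition. The main workhorse is Observation~\ref{obs:1} in the case $t=u=1$: the swap $\chi_{p,p+1}$ between a set of sum $\sigma$ and a set of sum $\sigma+1$ leaves $d(\A)$ unchanged and therefore preserves minimality. In the present setting the admissible swaps of this form are low$\leftrightarrow$exact (with the smaller element in the low set) and exact$\leftrightarrow$high (with the smaller element in the exact set).

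For (i), the first observation is that every $c$ with $x<c<y$ lies in an exact set: if $c\in X$ then $(c,y)$ would witness width $y-c<\omega(\A)$, and if $c\in Y$ then $(x,c)$ would witness width $c-x<\omega(\A)$. The equivalence ``$y-x=2$ iff $A=B$'' is immediate from $x+1=y-1$. To rule out $y-x\ge 4$ I would analyze where $x+2$ (and, when distinct, $y-2$) sit among $\{Z,W\}$. If $x+2$ lies in the same exact set as $x+1$, the admissible swap $\chi_{x,x+1}$ produces an equivalent minimal partition whose new low set contains both $x$ and $x+2$, so its width is at most $y-(x+2)<y-x$; the symmetric configuration $y-2\in B$ is handled by $\chi_{y-1,y}$. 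The remaining split configuration, where $x+2$ and $y-2$ lie in opposite exact sets, is reduced to one of the previous two by a case check, exploiting the fact that there are only two exact sets.

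Items (ii) and (iv) are one-swap arguments. For (ii), the admissible swap $\chi_{a-1,a}$ turns $A$ into a new low set still containing $x+1$ (since $a\neq x+1$), whence the pair $(x+1,y)$ has width $y-(x+1)<\omega(\A)$; item (iv) is symmetric via $\chi_{b,b+1}$. Items (iii) and (v) require two admissible swaps: in (iii), first $\chi_{x,x+1}$ produces an equivalent minimal partition in which $A$ is low and $X$ is exact with $a\in A$ and $a+1\in X$ still present; then $\chi_{a,a+1}$ is admissible, and the resulting low set retains $x+1$, giving width $y-(x+1)<\omega(\A)$. Item (v) is symmetric, using $\chi_{y-1,y}$ followed by $\chi_{b-1,b}$. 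For (vi), where $y-x=2$ and $x+1=y-1\in A$, I would compose the three swaps $\chi_{x,x+1}$, then $\chi_{y-1,y}$ (which is applicable because $y-1=x+1$ now lies in the new exact set coming from $X$), then $\chi_{c,c+1}$; a routine tracking of sums shows each step is admissible, and after the third swap $x$ sits in the (still unique) low set while $x+1$ sits in the new high set, collapsing the width to $1$.

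The principal obstacle is item (i): ruling out $y-x\ge 4$ requires enumerating placements of $x+1,x+2,y-2,y-1$ among $\{Z,W\}$ and in some sub-cases composing two admissible swaps rather than a single one. Once (i) is settled, items (ii)--(vi) follow a uniform one-, two-, or three-swap template that parallels the arguments underlying Lemma~\ref{lemma:3}, with each step justified by the same admissibility criterion from Observation~\ref{obs:1}.
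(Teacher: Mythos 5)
Your overall strategy is the same as the paper's: use the $d$-preserving swaps of Observation~\ref{obs:1} (with $t=u$) to produce an equivalent minimal partition of strictly smaller width. Your arguments for (ii)--(v) coincide with the paper's, and your three-swap composition for (vi) is a valid (if longer) substitute for the paper's single application of $\chi_{x,x+2}$ followed by Observation~\ref{obs:2}; all of these check out.

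The gap is in (i), specifically in the direction $A=B\Rightarrow y-x=2$. You call the equivalence ``immediate from $x+1=y-1$,'' but that only gives $y-x=2\Rightarrow A=B$; nothing you write rules out $y-x=3$ with $x+1$ and $y-1=x+2$ being two \emph{distinct} elements of the same exact set. The same omission resurfaces in your treatment of $y-x\ge 4$: after excluding $x+2\in A$ and $y-2\in B$, you describe the residual case as $x+2$ and $y-2$ lying in ``opposite exact sets,'' but when $A=B$ both of them land in the \emph{other} exact set $C\ne A$, and neither $\chi_{x,x+1}$ nor $\chi_{y-1,y}$ touches $C$, so the advertised ``reduction to one of the previous two'' does not go through as described. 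The missing ingredient is to apply your own case-(a) swap to the element $y-1\in A$ rather than to $x+2$: if $A=B$ and $y-1\ne x+1$, then $\chi_{x,x+1}$ turns $A$ into a low set that still contains $y-1$, giving width at most $1$ --- a contradiction that disposes of both $y-x=3$ with $A=B$ and the $A=B$ sub-case of $y-x\ge 4$. This is precisely the content of Lemma~\ref{lemma:3}(a), which the paper invokes at exactly this point; with that one addition your proof of (i) closes, and note that this direction is actually used later (the $y-x=3$ branch of the main theorem assumes $x+1$ and $x+2$ lie in the two \emph{different} exact sets), so it cannot be waved away.
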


 \begin{figure}[h!]
  \centering
  \subfigure[]{\label{fig4a}\includegraphics[scale=0.35]{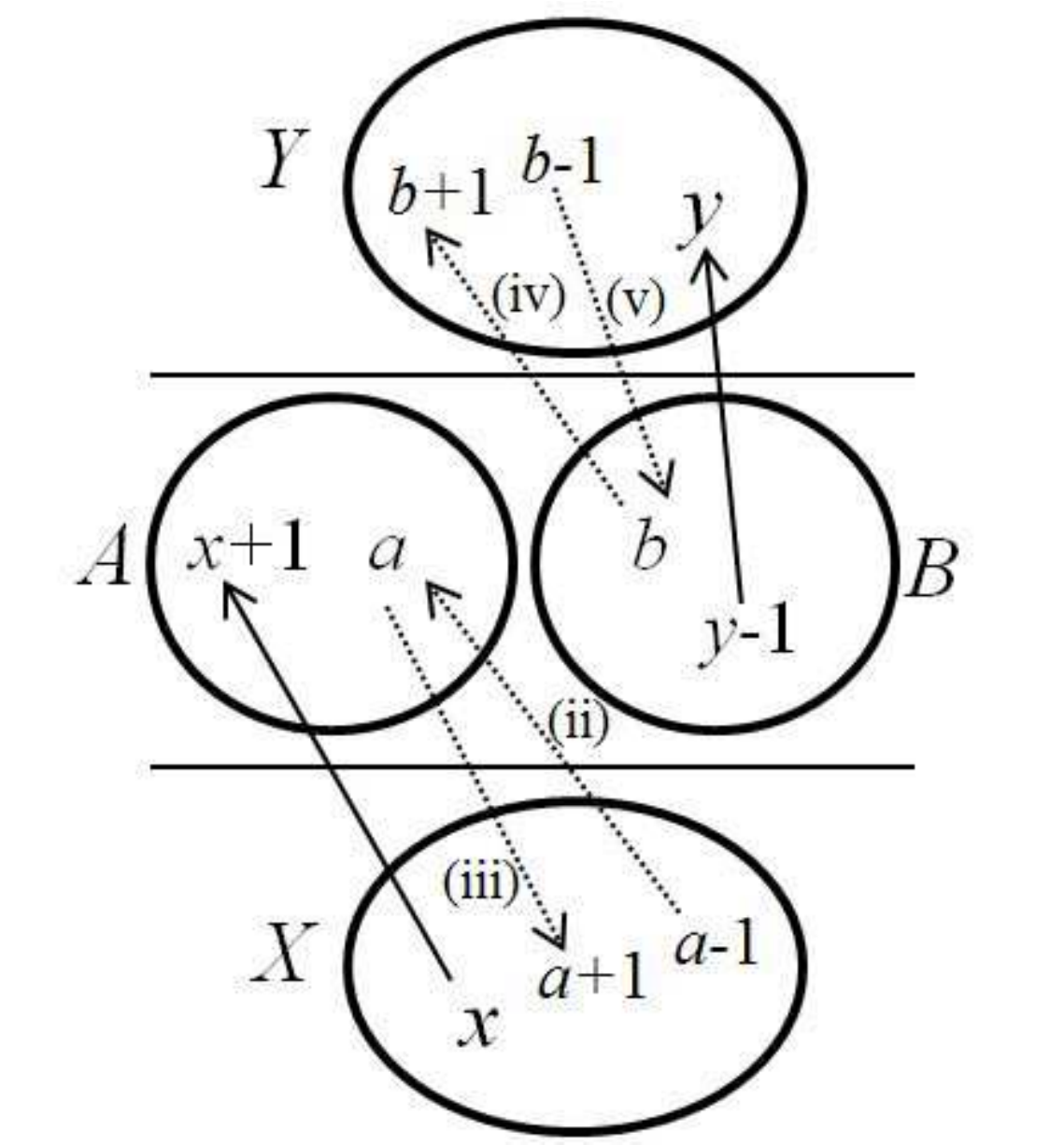}}
  \subfigure[]{\label{fig4b}\includegraphics[scale=0.35]{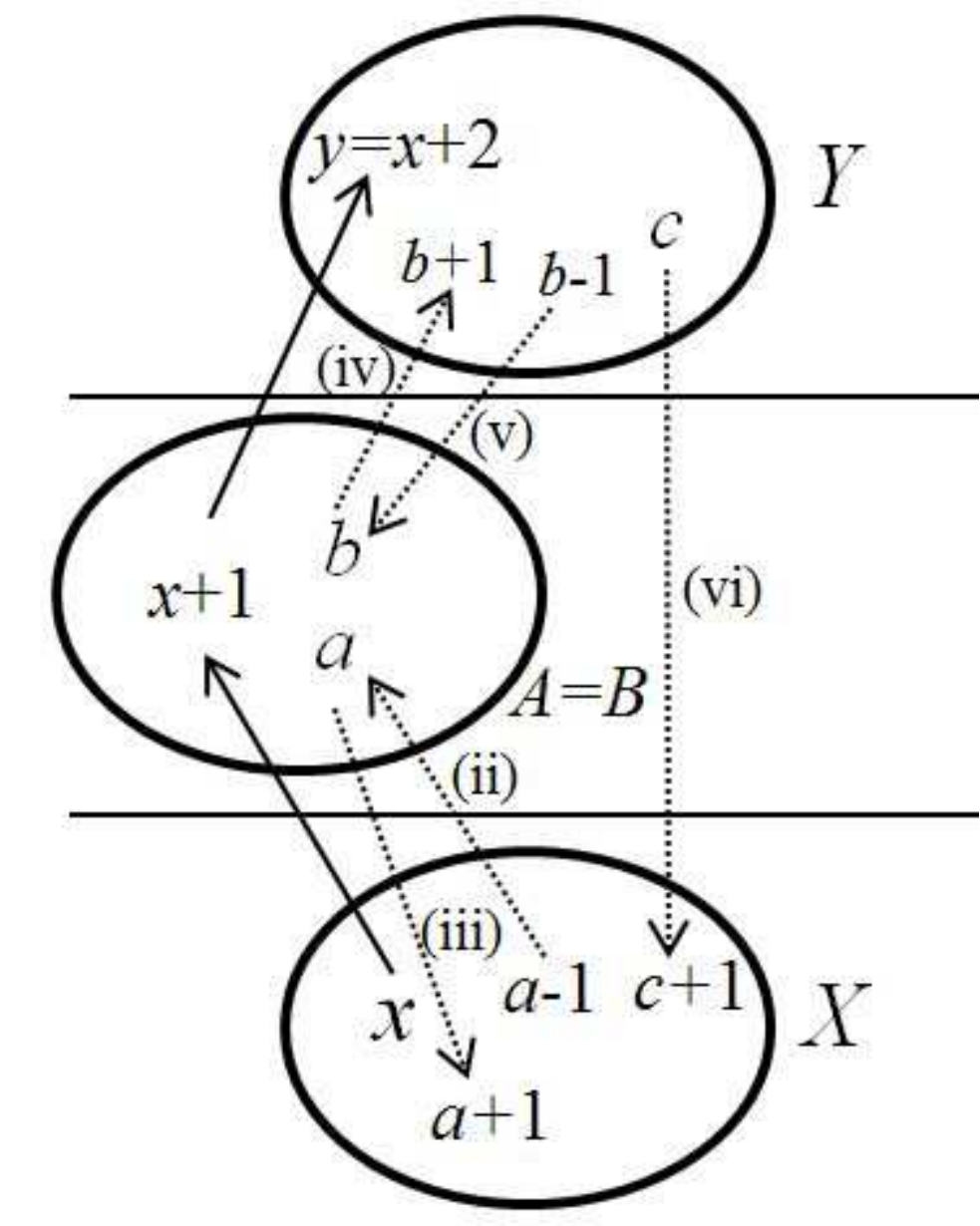}}
  \caption{}
  \label{fig4}
\end{figure}

\begin{proof}
First note that $\A$ exists by Lemma~\ref{lemma:5}. (i) The existence of $z\in A$ satisfying $x+1<z<y$ is not possible since $z$ would be in the low set of $\A_{x,x+1}$ while $y$ is in the high set, contradicting the minimality of $\omega(\A)$. Similarly, the existence of $w\in B$ such that $x<w<y-1$ would imply that $w$ is in the high set of $\A_{y-1,y}$ and $x$ in the low set. Again, a contradiction. Thus, $y-x\le 3$. If $x+1$ and $y-1$ are in the same set, then they are equal, by Lemma~\ref{lemma:3}(a), and in this case $y-x=2$.

Now, let $a\in A-(x+1)$.
(ii) If $a-1\in X$, then $\A_{a,a-1}$ has $x+1$ in a low set and $y$ in a high set, contradicting the minimality of $\omega(\A)$.
(iii) If $a+1\in X-x$ we apply $\chi_{x,x+1}$ and obtain a setup as in (ii). The proofs of (iv) and (v) are similar.

(vi) Suppose such $c\in Y-y$ exists. Note that $\A_{x,x+2}\equiv\A$, but now $c$ is in the low set and $c+1$ is in the high set, contradicting Observation~\ref{obs:2}.
\end{proof}

\begin{lemma}\label{lemma:7}
Let $\A=\{A_i\}_{i=1}^4$ be a minimal partition of $[n]$ implementing $\{p_i\}_{i=1}^4$, of minimal width among all such minimal partitions. Then, the configurations illustrated in Figure~\ref{fig5} are not possible, assuming $x+2\ne d$ (Figures~\ref{fig5d} and \ref{fig5f}) and $x\ne d+2$ (Figures~\ref{fig5e} and \ref{fig5f}).
\end{lemma}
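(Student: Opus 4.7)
The plan is to follow the same template used in the proofs of Lemma~\ref{lemma:3} and Lemma~\ref{lemma:6}: for each forbidden configuration in Figure~\ref{fig5}, I would apply a carefully chosen swap operator $\chi_{a,b}$, or a composition of such operators, and derive a contradiction either with Observation~\ref{obs:2}, with an earlier part of Lemma~\ref{lemma:3}, with parts (ii)--(vi) of Lemma~\ref{lemma:6}, or with the minimality of $\omega(\A)$. Since $\A$ has a low set $X$ (sum $s^{n,4}-1$), a high set $Y$ (sum $s^{n,4}+1$), and two exact sets $Z,W$, Observation~\ref{obs:1} tells us exactly how each such swap affects $d(\A)$: exchanges between the two exact sets preserve $d(\A)$ and yield equivalent partitions, while exchanges involving $X$ or $Y$ change $d(\A)$ in a sign determined by the size of the gap.

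For the configurations (a), (b), (c) (presumably those not involving a "distance-two" pair), I expect a single swap of the form $\chi_{a,a+1}$ between an exact set and one of $X,Y$ to produce an equivalent partition in which either some high-set element sits immediately above a low-set element, directly contradicting Observation~\ref{obs:2}, or in which a smaller gap $y'-x'<\omega(\A)$ between a high and a low element appears, contradicting the minimality of $\omega(\A)$. For configurations (d), (e), (f), which involve a gap-2 swap, the relevant operator is $\chi_{x,x+2}$ or $\chi_{d,d+2}$; the side conditions $x+2\ne d$ and $x\ne d+2$ are precisely the non-degeneracy hypotheses guaranteeing that these swaps genuinely act between two distinct parts of the partition. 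After the gap-2 swap, the configuration should reduce to one already forbidden by Lemma~\ref{lemma:3} or by the handled cases (a)--(c) of the current lemma, completing the argument.

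The main obstacle will be the bookkeeping of the case analysis: because there are now \emph{two} exact sets $Z$ and $W$, one must track at every intermediate step which exact set each displaced element inhabits, and then confirm that the displaced configuration matches the labelling of $A$ and $B$ in Lemma~\ref{lemma:6}. A second delicate point is that a gap-2 swap $\chi_{x,x+2}$ yields an equivalent partition only when $x$ and $x+2$ lie in sets of equal sum, so in each of (d)--(f) one must verify that the geometry of the configuration places $x+2$ (resp.\ $d+2$) in the \emph{right} exact set for the operator to produce a valid minimal partition of the same width before one invokes the previous obstruction. Once these positional checks are in place, each subfigure should reduce to a short argument of a few lines, mirroring the pattern already established in Lemma~\ref{lemma:3}.
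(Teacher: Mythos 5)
Your reduction strategy for configurations (b)--(f) does match the paper: (b) and (c) are sent to (a) by $\chi_{x,x+1}$ and $\chi_{x+1,x+2}$ respectively, and (d), (e), (f) are sent to (c), (b), (a) by the gap-two swap $\chi_{x,x+2}$, with the side conditions $x+2\ne d$ and $x\ne d+2$ playing exactly the non-degeneracy role you describe. The genuine gap is in your treatment of the base configuration (a). You propose to kill it with a single swap $\chi_{a,a+1}$ that either puts a high element directly above a low element (contradicting Observation~\ref{obs:2}) or shrinks the width. That cannot work: configuration (a) consists of two chains $x\in A_1$, $x+1\in A_2$, $x+2\in A_4$ and $d\in A_1$, $d+1\in A_3$, $d+2\in A_4$, each link of which is individually legal (a unit step from a low set into an exact set, or from an exact set into a high set, is consistent with minimality and with width $2$), and no single local exchange produces a forbidden adjacency or a smaller gap. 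The obstruction is global, not local.

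What the paper actually does for (a) is a full structural determination of the partition: assuming without loss of generality $x>d$, it uses Lemma~\ref{lemma:3}(a),(b) and Lemma~\ref{lemma:6}(ii),(iii),(v) to show that every element of $A_2-(x+1)$ forces its neighbors back into $A_2$, concluding $A_2=\{x+1,x+3,\dots,n\}$; a symmetric argument gives $A_3=\{1,\dots,d-1,d+1\}$; hence $A_1\cup A_4=\{d,d+2,\dots,x,x+2\}$, and Lemma~\ref{lemma:6}(vi) then pins down $A_1=\{x,d\}$ and $A_4=\{x+2,d+2\}$, whose sums differ by $4$ rather than the required $2$ --- a numerical contradiction. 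This global argument is the entire substance of the lemma, and your proposal does not contain it or any substitute for it, so as written the proof of case (a), and with it all the cases that reduce to it, is missing.
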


 \begin{figure}[h!]
  \centering
  \subfigure[]{\label{fig5a}\includegraphics[scale=0.23]{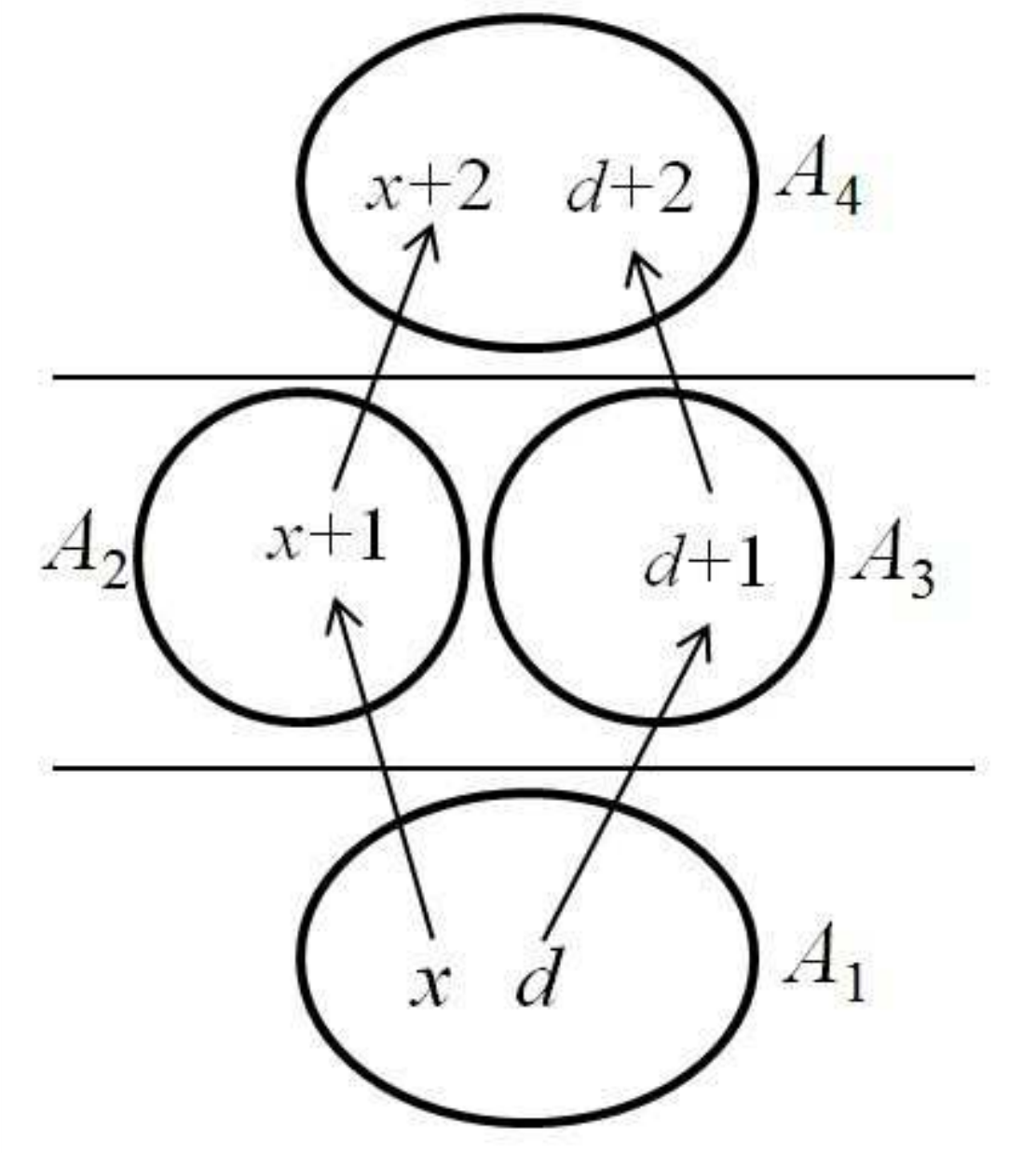}}
  \subfigure[]{\label{fig5b}\includegraphics[scale=0.23]{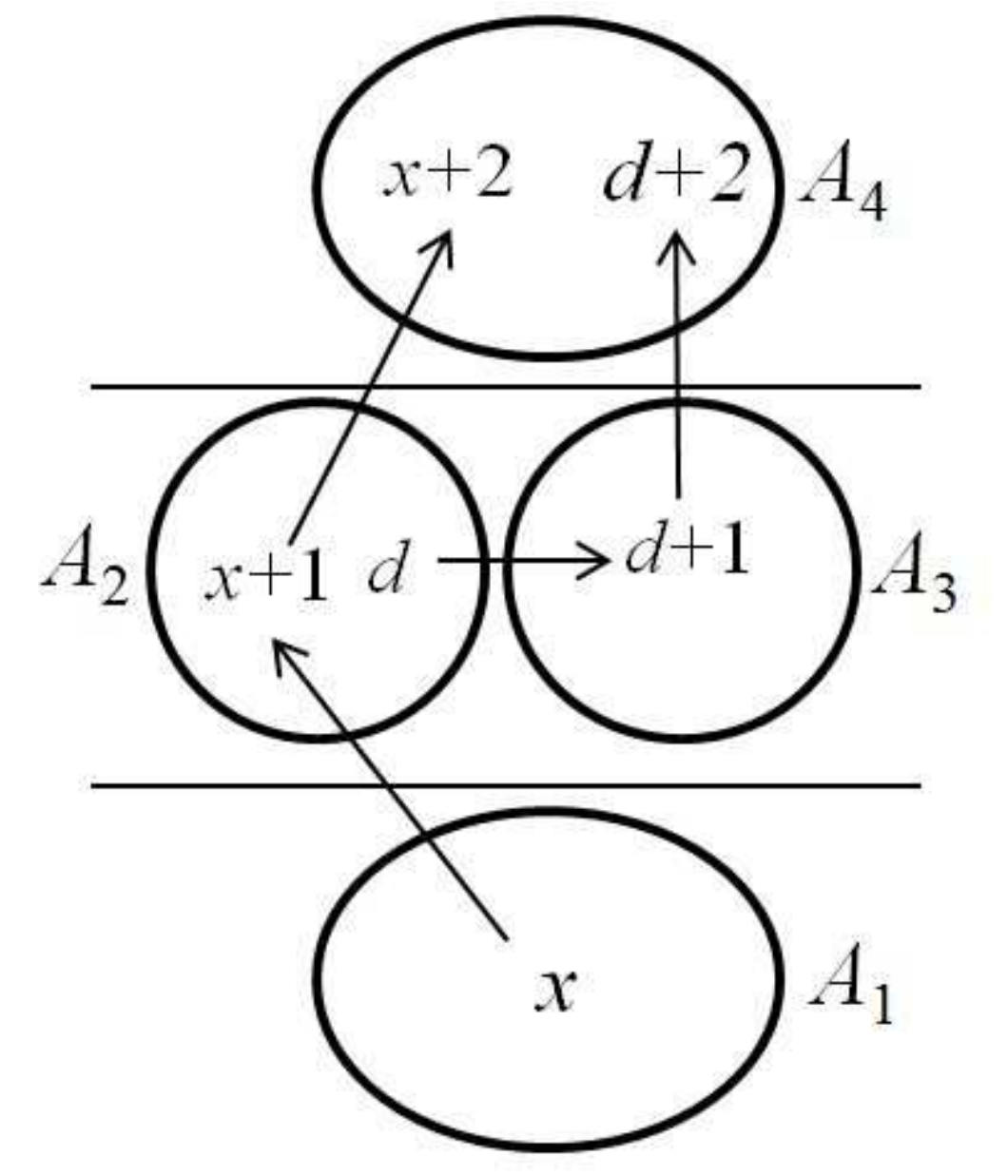}}
  \subfigure[]{\label{fig5c}\includegraphics[scale=0.23]{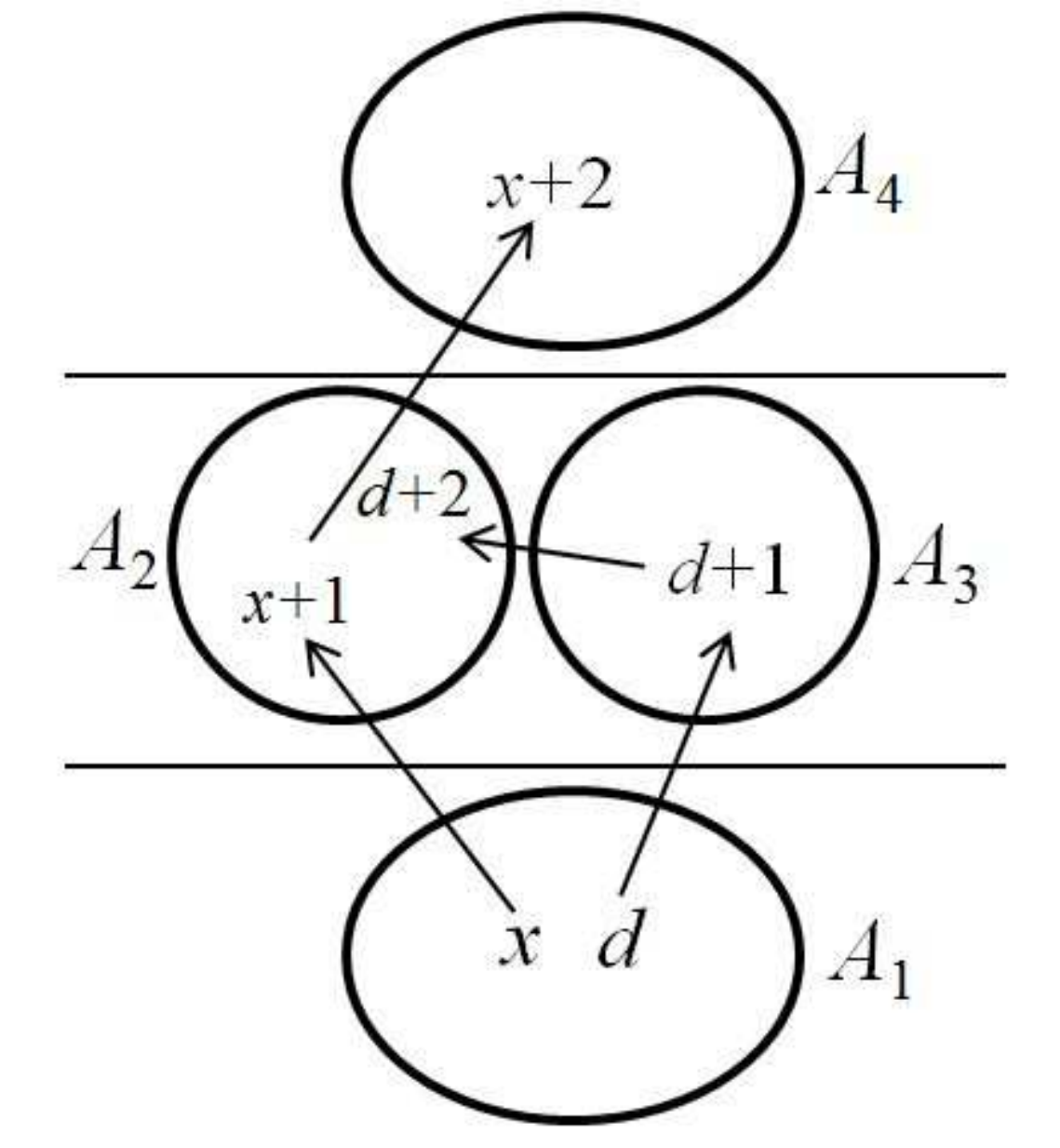}}
  \subfigure[]{\label{fig5d}\includegraphics[scale=0.23]{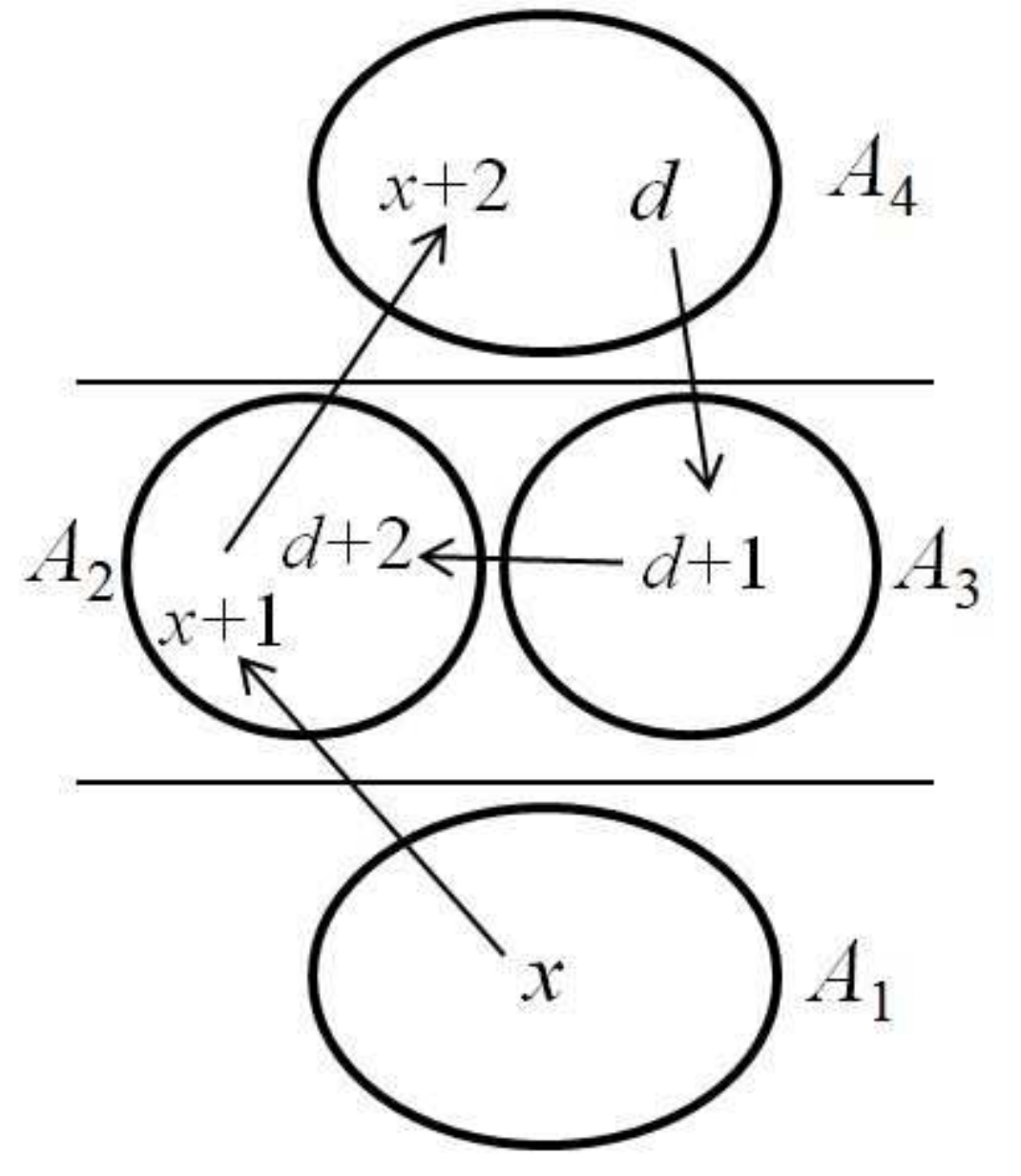}}
  \subfigure[]{\label{fig5e}\includegraphics[scale=0.23]{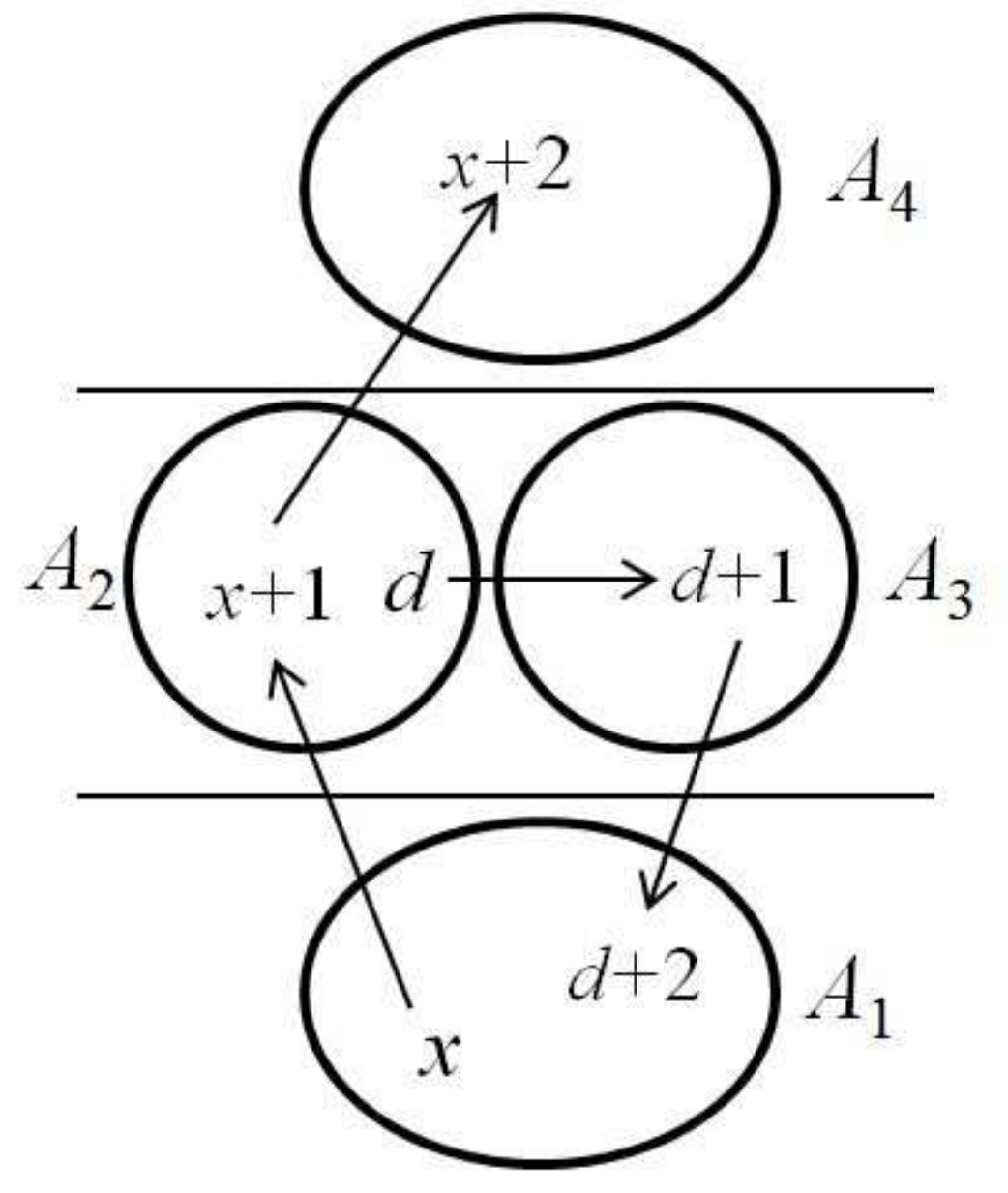}}
  \subfigure[]{\label{fig5f}\includegraphics[scale=0.23]{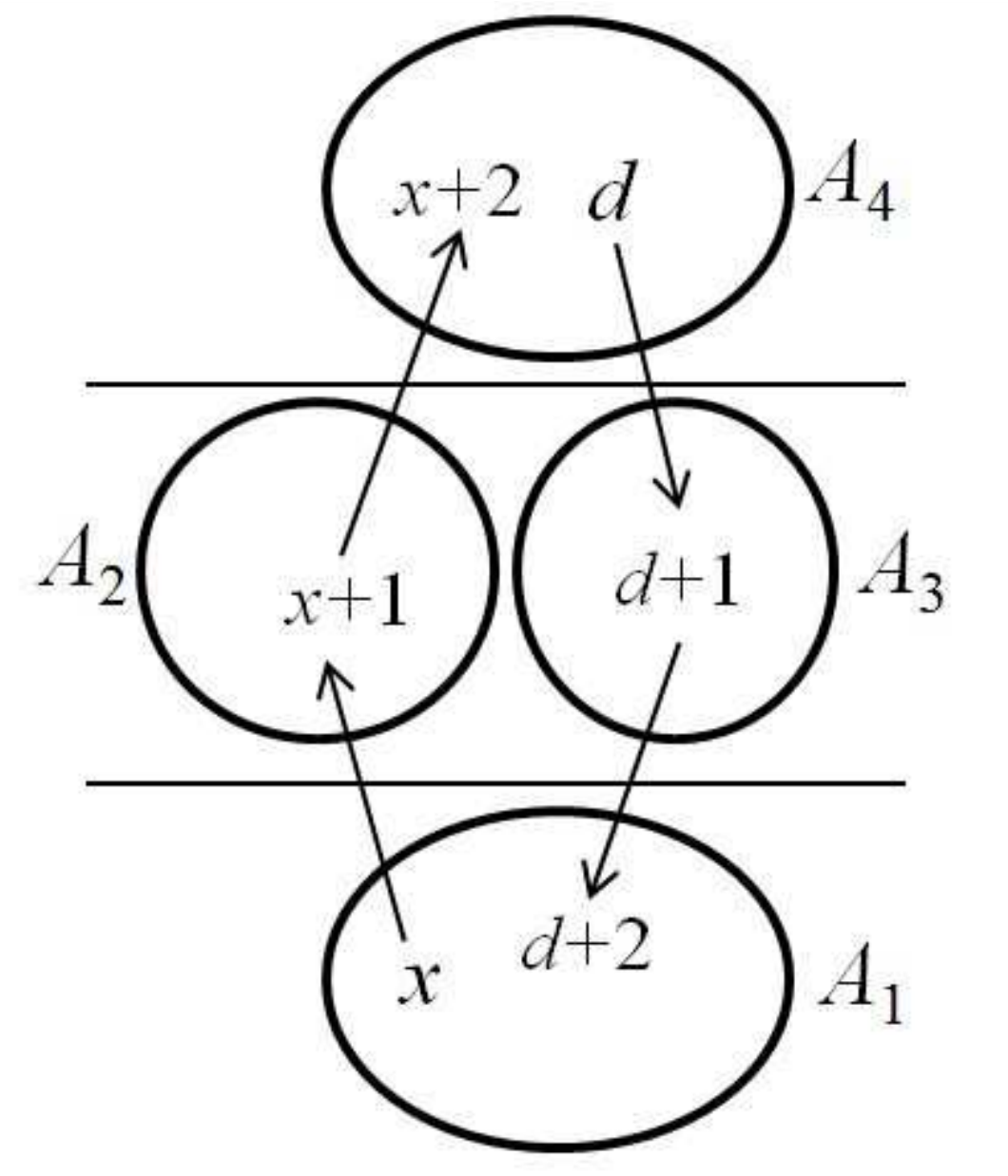}}
  \caption{}
  \label{fig5}
\end{figure}

\begin{proof}
 Assume the configuration in Figure~\ref{fig5a} exists. Since there is symmetry between the roles of $x$ and $d$, we may assume that $x>d$. Assume there exists $c\ne n$ in $A_2-(x+1)$ such that $c+1\not\in A_2$. We have $c+1\not\in A_1-x$ (Lemma~\ref{lemma:6}(iii)), $c+1\not\in A_3$ (Lemma~\ref{lemma:3}(b)) and $c+1\not\in A_4$ (Lemma~\ref{lemma:3}(a)).

Thus, we must have $c+1=x$. Let $k\ge0$ be minimal such that $c-k\in A_2$ but $c-k-1\not\in A_2$ (assuming there exists such $k$) and denote $u=c-k$. We have, $u-1\not\in A_1$, by Lemma~\ref{lemma:6}(ii), and $u-1\not\in A_4$, by Lemma~\ref{lemma:6}(v). If $u-1\in A_3$ it contradicts Lemma~\ref{lemma:3}(b). We conclude that such $k$ does not exist.

 Thus, if $A_2$ contains any element smaller than $x$, it must contain $\{1,2,\ldots,x-1\}$. This is impossible, since $d<x$. Hence, such $c$ does not exist and we must have that $A_2=\{x+1, n-l,\dots,n\}$ for some $l\ge 0$. Let $u=n-l$. As in the previous paragraph, $u-1\not\in A_1\cup (A_4-(x+2))\cup A_3$. So, we must have $u-1=x+2$. That is, $A_2=\{x+1, x+3,\dots,n\}$.

 Using a similar argument as for $A_2$ above we conclude that $A_3=\{1,2,\ldots,d-1,d+1\}$. Thus, $A_1\cup A_4=\{d,d+2,\ldots,x,x+2\}$. Since $d+2\in A_4$ and $x\in A_1$, there must exist $a\in A_4$ such that $a+1\in A_1$. This would contradict Lemma~\ref{lemma:6}(vi), unless $a=d+2$ and $a+1=x$. In this case we must have $A_1=\{x,d\}$ and $A_4=\{x+2,d+2\}$. Hence, $S(A_1)=x+d$ and $S(A_4)=x+d+4$. This yields a contradiction since $S(A_1)=S(A_4)-2$. We conclude that a configuration as in Figure~\ref{fig5a} is not possible.

 Now, if a configuration as in Figure~\ref{fig5b} exists, applying $\chi_{x,x+1}$ yields an equivalent partition with a configuration similar to the one in Figure~\ref{fig5a}. If a configuration as in Figure~\ref{fig5c} exists, applying $\chi_{x+1,x+2}$ yields a configuration similar to the one in Figure~\ref{fig5a}. If a configuration as in Figure~\ref{fig5d} exists, applying $\chi_{x,x+2}$ yields an equivalent partition with a configuration similar to the one in Figure~\ref{fig5c}. If a configuration as in Figure~\ref{fig5e} exists, applying $\chi_{x,x+2}$ yields a configuration similar to the one in Figure~\ref{fig5b}. Finally, If a configuration as in Figure~\ref{fig5f} exists, applying $\chi_{x,x+2}$ yields a configuration similar to the one in Figure~\ref{fig5a}.
 \end{proof}

\section{Proof of Theorem~\ref{thm1}}

\begin{proof}[\unskip\nopunct]
We assume, for contradiction, that $d^{\ip}_{\textrm{min}}>0$ for the given $\{p_i\}_{i=1}^4$. Let $\A=\{A_1,A_2, A_3 ,A_4\}$ be a minimal partition of $[n]$ implementing $\{p_i\}_{i=1}^4$, such that $\omega(\A)$ is minimal among all such minimal partitions. By Lemma~\ref{lemma:4}, we may assume that $S(A_1)=s^{n,4}-1$, $S(A_2)=S(A_3)=s^{n,3}$ and $S(A_4)=s^{n,4}+1$. Let $x\in A_1$ and $y\in A_4$ be such that $y-x=\omega(\A)$. We may assume that $x+1\in A_2$. By Lemma~\ref{lemma:6}(i), there are only two cases to consider: $y-x=2$ and $y-x=3$.
\medskip

First assume $y-x=2$. That is, $x\in A_1, x+1\in A_2$ and $x+2\in A_4$. It will be convenient to notice that we have the same setup as in Figure~\ref{fig4b}, with $X=A_1$, $A=B=A_2$ and $Y=A_4$. Let $B_1=A_1-x$, $B_2=A_2-(x+1)$, $B_3=A_3$ and $B_4=A_4-(x+2)$. We may make two assumptions:

\textbf{Assumption 1:} $n\not \in A_1$

\textbf{Assumption 2:} $\max_{A_2}<\max_{A_4}$.

(If $n\in A_1$ we apply $\chi_{x,x+2}$, which ensures both assumptions. If $n\not\in A_1$ and Assumption 2 does not hold, we apply $\chi_{x+1,x+2}$.)

Let $m_i$ be the maximal elements in $B_i$ for $i=1,\ldots,4$. By our assumptions, $m_1, m_2<n$. By Lemma~\ref{lemma:3}(a), $m_2+1\not\in A_4$ and by Lemma~\ref{lemma:6}(iii), $m_2+1\not\in B_1$. Thus, either $m_2+1\in A_3$ or $m_2+1=x$. Suppose $m_2+1\in A_3$. By Assumption 2, there exists $k\ge1$ such that $m_2+k\in A_3$ and $m_2+k+1\not\in A_3$ (otherwise $m_2+1,m_2+2,\ldots, n$ are all in $A_3$ and $m_2$ would be larger than any element of $A_4$). Suppose $m_2+k+1\in A_4$. If $k=1$ it contradicts Lemma~\ref{lemma:7}(b), and if $k>1$ it contradicts Lemma~\ref{lemma:3}(b). Now, suppose $m_2+k+1\in A_1$ and $m_2+k+1\ne x$. If $k=1$ it contradicts Lemma~\ref{lemma:7}(e), and if $k>1$ we contradict Lemma~\ref{lemma:3}(c). Thus, $m_2+k+1=x$ for some $k\ge 0$. In any case we have $x-1\in A_2\cup A_3$. In particular, $x-1\not\in A_1$.

Now, by Lemma~\ref{lemma:3}(a), $m_1+1\not\in A_2$, and by Observation~\ref{obs:2}, $m_1+1\not\in A_4$. It follows that $m_1+1\in A_3$.
Suppose there exists $l\ge1$ such that $m_1+l\in A_3$ and $m_1+l+1\not\in A_3$. Note that $m_1+l+1\ne x$, since we already know that $m_2+k+1=x$. Since $m_1$ is maximal in $A_1-x$, we must have $m_1+l+1\in A_2\cup A_4$. Suppose $m_1+l+1\in A_4$. If $l=1$, then $m_1+2\in A_4$ and we contradict Lemma~\ref{lemma:7}(a). If $l>1$, then $m_1+1\ne m_1+l$ and we contradict Lemma~\ref{lemma:3}(a).
Now, suppose $m_1+l+1\in A_2$. If $l=1$ it contradicts Lemma~\ref{lemma:7}(c). If $l>1$, it contradicts Lemma~\ref{lemma:3}(b). We conclude that $m_1+1,\ldots, n\in A_3$ and thus, $m_4<n$.

We have $m_4+1\not\in A_2$, by Assumption 2, and $m_4+1\not\in A_1$, by Lemma~\ref{lemma:6}(vi) and the fact that $x-1\in A_2\cup A_3$. So, we must have $m_4+1\in A_3$ and there exists $t\ge1$ such that $m_4+t\in A_3$ and $m_4+t+1\not\in A_3$ (since $m_1+1,\ldots, n\in A_3$). Clearly, $m_4+t+1\ne x+2$, so $m_4+t+1$ can only be in $A_1$, by Assumption 2. Now, $m_4+t+1\ne x$, since $m_2+k+1=x$. So, $m_4+t+1\in B_1$. If $t=1$ it contradicts Lemma~\ref{lemma:7}(f), and if $t>1$, we have a contradiction to Lemma~\ref{lemma:3}(e). This concludes the case where $y-x=2$.

\medskip

Now assume $y-x=3$. By Lemma~\ref{lemma:6}, we have $x+1\in A_2, x+2\in A_3$ and $y=x+3\in A_4$. It will be convenient to notice that we have a setup similar to the one in Figure~\ref{fig4a}, with $X=A_1$, $A=A_2$, $B=A_3$, $Y=A_4$ and $y=x+3$. Let $B_1=A_1- x, B_2=A_2- (x+1), B_3=A_3- (x+2)$ and $B_4=A_4-(x+3)$.

Let $z\in B_1$ and assume $z\ne n$. We know that $z+1\not\in A_4$ by Observation~\ref{obs:2}, $z+1\not\in A_2$ by Lemma~\ref{lemma:6}(ii),  and $z+1\not\in A_3$ by Lemma~\ref{lemma:3}(a). Thus, $z+1\in A_1$.
Now, let $z\in B_2$ and $z\ne n$. We have, $z+1\not\in A_4$ by Lemma~\ref{lemma:3}(a), and $z+1\not\in B_1$ by Lemma~\ref{lemma:6}(iii). Also, $z+1\not\in A_3$, by Lemma~\ref{lemma:3}(b). Thus, $z+1\in B_2$ or $z+1=x$. It follows that either $B_1$ or $B_2$ is equal to $\{t,t+1,\ldots,x-1\}$ for some $t\ge 1$ and the other is equal to $\{s,s+1,\ldots, n\}$ for some $s > x+3$. We may assume $A_1=\{x,s,s+1,\ldots,n\}$ and $A_2=\{t,t+1,\ldots,x-1,x+1\}$ (by applying $\chi_{x,x+1}$ if necessary).

Let $z>1$ be an element of $B_4$. We have $z-1\not\in A_1$ by Observation~\ref{obs:2}, $z-1\not\in A_2$ by Lemma~\ref{lemma:3}(a), and $z-1\not\in A_3$ by Lemma~\ref{lemma:6}(iv). Thus, $z-1\in A_4$. Let $z>1$ be an element of $B_3$. We have $z-1\not\in A_1$ by Lemma~\ref{lemma:3}(a), $z-1\not\in B_4$ by Lemma~\ref{lemma:6}(v), and $z-1\not\in A_2$, by Lemma~\ref{lemma:3}(b). Thus, $z-1\in B_3$ or $z-1=x+3$.
We conclude that either $B_3$ or $B_4$ is equal to $\{1,2,\ldots,t-1\}$ and the other is equal to $\{x+4,\ldots, s-1\}$ for the same $t$ and $s$ as above. Since none of the $A_i$'s has size 1, we must have that $1<t\le x-1$ and $x+3<s\le n$. By applying $\chi_{x+2,x+3}$ if necessary, we may assume that $A_3=\{1,2,\ldots,t-1,x+2\}$  and  $A_4=\{x+3,x+4,\ldots,s-1\}$ (Figure~\ref{fig6a}).

 \begin{figure}[h!]
  \centering
  \subfigure[]{\label{fig6a}\includegraphics[scale=0.3]{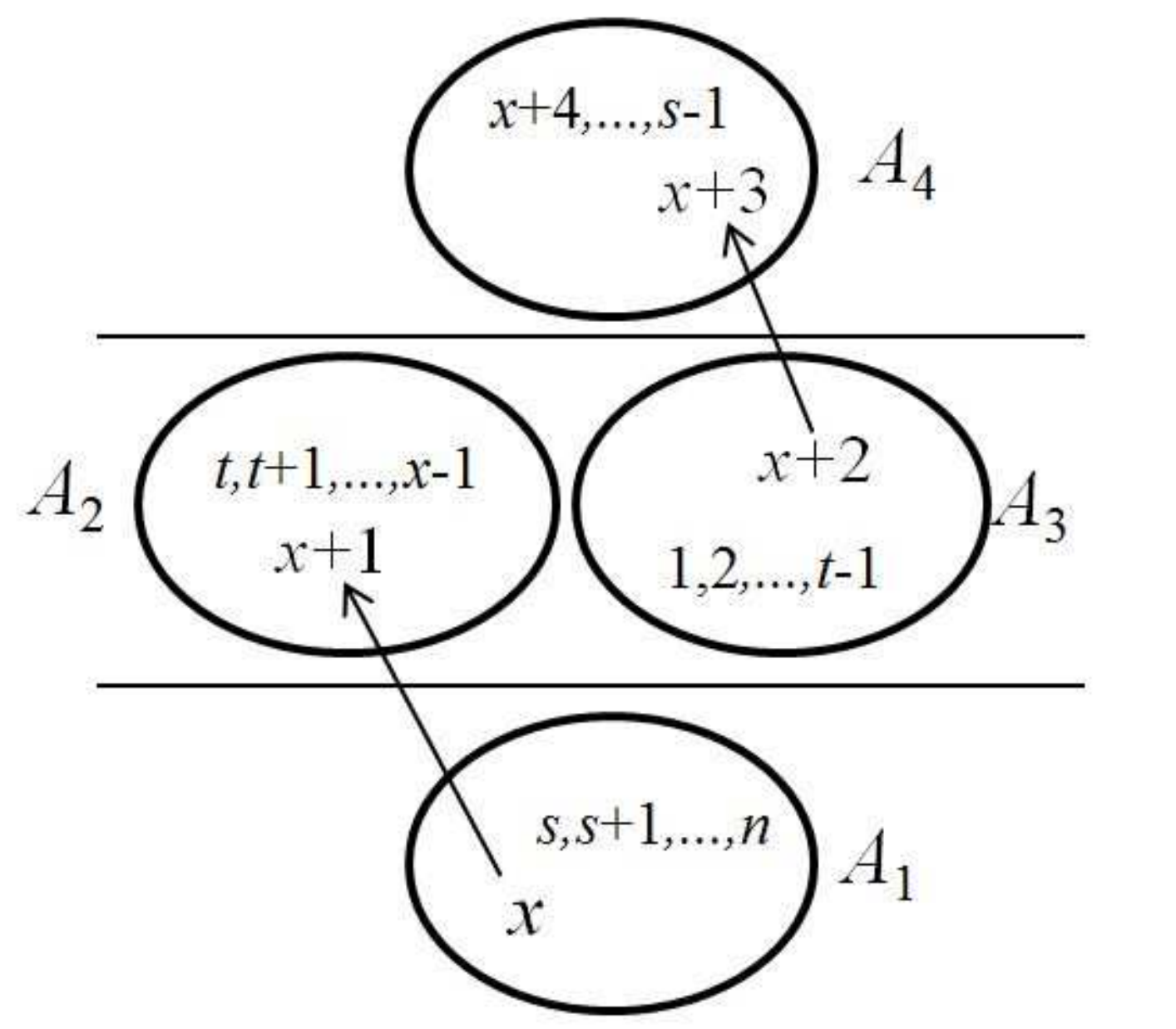}}
  \subfigure[]{\label{fig6b}\includegraphics[scale=0.3]{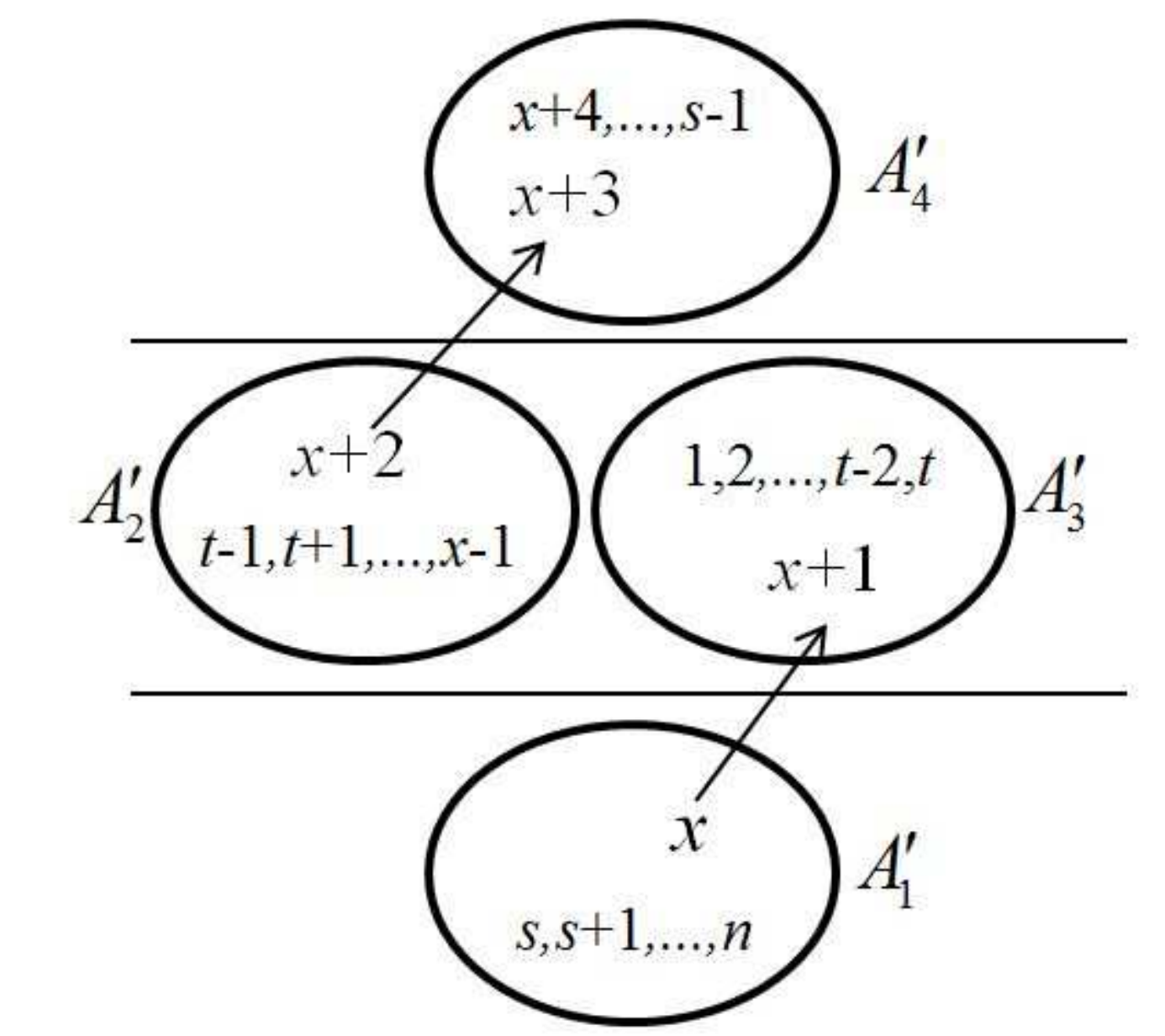}}
  \caption{}
  \label{fig6}
\end{figure}

Note that $s^{n,4}=S(A_4)-1\ge (x+3)+(x+4)-1=2x+6$. Thus, $|A_2|>2$, that is, $t<x-1$. Let $\A'$ be partition consisting of $A_1'=A_1, A_2'=A_2\setminus\{t,x+1\}\cup\{t-1,x+2\},  A_3'=A_3\setminus\{t-1,x+2\}\cup\{t,x+1\}$, and $A_4'=A_4$ (Figure~\ref{fig6b}). Note that $S(A_i')=S(A_i)$ for $i=1,\ldots,4$, and thus $\A'$ is also minimal with the same minimal width. Since $t<x-1$, we have $t+1\in A_2'$ and we have a contradiction to Lemma~\ref{lemma:3}(b). This completes the proof.

\end{proof}

\begin{remark}
In the case $k=2$ there is yet another simple proof:

Suppose $p_1+p_2=n$, $p_1\le p_2$ and $\sum_{i=1}^{p_1}(n-i+1)\ge s^{n,2}$ (Condition~(\ref{nec_cond})). Consider the following two partitions of $[n]$: $\A=\{A_1,A_2\}$ and $\B=\{B_1,B_2\}$, such that $|A_i|=|B_i|=p_i$, for $i=1,2$, $A_1=\{1,\ldots,p_1\}$ and $B_1=\{n-p_1+1,\ldots,n\}$. We have $|A_1|<s^{n,2}$ since $p_1\le p_2$ and $|B_1|\ge s^{n,2}$ by (\ref{nec_cond}). We show that we can switch from $\A$ to $\B$ by a sequence of operations of the form $\chi_{a,a+1}$. Thus, at some point along the way we must have an equitable partition.

We start with partition $\A$ and apply $\chi_{1,2}\circ\chi_{2,3}\circ\cdots\circ\chi_{p_1-1,p_1}\circ\chi_{p_1,p_1+1}$. This results in the partition consisting of $\{2,3,\ldots,p_1,p_1+1\}$ and $\{1,p_1+2,\ldots,n\}$. Then, we apply $\chi_{2,3}\circ\cdots\circ\chi_{p_1+1,p_1+2}$, resulting in $\{3,4,\ldots,p_1+2\}$ and $\{1,2,p_1+3,\ldots,n\}$, and so on. Eventually we arrive at partition $\B$.

It might be possible to generalize this continuity approach to higher $k$'s by applying a higher dimensional continuity technique, such as Sperner's theorem. The problem is to define the right division into $(k-1)$-dimensional simplices.
\end{remark}

\section*{Acknowledgments}
The author thanks the referees for a very thorough reading of the manuscript.


\end{document}